\theoremstyle{cupthm}
\newtheorem{thm}{Theorem}[section]
\newtheorem{cor}[thm]{Corollary}
\newtheorem{lemma}[thm]{Lemma}
\theoremstyle{cupdefn}
\newtheorem{defn}[thm]{Definition}
\theoremstyle{cuprem}
\newtheorem{rem}[thm]{Remark}
\numberwithin{equation}{section}
\newcommand{\Hsquare}{%
	\text{\fboxsep=-.1pt\fbox{\rule{0pt}{1ex}\rule{1ex}{0pt}}}%
}
\newcommand{\obp}{\otimes^{\gamma}}
\newcommand{\sA}{\mathscr A}
\newcommand{\mA}{\mathcal{A}}
\newcommand{\mB}{\mathcal B}
\theoremstyle{definition}
\newtheorem{theorem}{Theorem}
\newtheorem{definition}{Definition}
\newcommand{\vv}{\vert\vert}
\begin{document}

	\title{Geometry of Banach algebra $\mA$ and the Bidual of $L^1(G)\hat{\otimes}\mA$}
		\author{Lav Kumar Singh} \keywords{ Banach
			algebras, Arens regularity, projective
			tensor product, Topological Center, Group algebra.}
		
		\subjclass[2010]{47B10, 46B28, 46M05}
		
		\thanks{This research was supported by the Intitute post-doctoral fellowship provided to the author by the Indian Institute of Science Education and Research-Bhopal(India) }
		\email{lavksingh@hotmail.com}
	\maketitle

	\begin{abstract}
		This article is intended towards the study of the bidual of generalized group algebra $L^1(G,\mA)$ equipped with two Arens product, where $G$ is any locally compact group and $\mA$ is a Banach algebra. We show that the left topological center of $(L^1(G)\hat\otimes\mA)^{**}$ is a Banach $L^1(G)$-module if $G$ is abelian. Further it also holds permanance property with respect to the unitization of $\mA$. We then use this fact to extend the remarkable result of A.M Lau and V. Losert\cite{Lau-losert}, about the topological center of $L^1(G)^{**}$ being just $L^1(G)$, to the reflexive Banach algebra valued case using the theory of vector measures.  We further explore pseudo-center of $L^1(G,\mA)$ for non-reflexive Banach algebras $\mA$ and give a partial characterization for elements of pseudo-center using the Cohen's factorization theorem. In the running we also observe few consequences when $\mA$ holds the Radon-Nikodym property and weak sequential completeness.
	\end{abstract}
\section{Introduction}
Arens regularity is an important tool for classifying Banach algebras. R. Arens in \cite{Arens} defined two products $\Hsquare$ and $\diamond$ on the bidual of a Banach algebra $\mA$. When these two products are same, the Banach algebra $\mA$ is said to be Arens regular. He went on to prove that the algebra $\ell^1(\mathbb Z)$ equipped with point-wise multiplication is Arens regular. Further it can easily be seen that every reflexive Banach algebra is Arens regular. Remarkably, every $C^*$-algebra is Arens regular (\cite{Palacios}). On the other end, for any locally compact infinite group $G$, the Banach algebra $L^1(G)$ is known to be Arens irregular. {\"U}lger in \cite{Ulger} showed that the projective tensor product of Arens regular Banach algebras may not be regular. but if $\mA\hat\otimes\mB$ is Arens regular then both $\mA$ and $\mB$ must be Arens regular. He also explored the connections between geometric properties (like RNP, WCG, WSC etc) of the underlying Banach space of an algebra $\mA$ and its Arens regularity in his subsequent articles. A.M Lau and V. Losert showed in \cite{Lau-losert} that the left topological center of $L^1(G)^{**}$ is exactly $L^1(G)$ for any locally compact group $G$. \\

We consider the vector valued Banach algebra  $L^1(G,\mA)=L^1(G)\hat\otimes \mA$, where $\mA$ is a Banach algebra and $G$ is a locally compact group. The natural question- whether the topological center of $L^1(G,\mA)^{**}$ is $L^1(G,\mA)$ remains un-answered even for the case when $\mA$ is finite dimensional Banach algebra, due to the lack of structure theorem for finite dimensional Banach algberas. We explore this aspect in this article and settle the proof that for a compact abelian group $G$ and reflexive Banach algebra $\mA$, the Banach algebra $L^1(G,\mA)$ is Strongly Arens irregular. To do so, we first show that the topological center  $Z(L^1(G,\mA)^{**})$ is an $L^1(G)$-module and holds permanance property with respect to unitization. This will enable us to assume that $\mA$ is a unital algebra and avoid approximate idenitity arguments at each step. Many of the techniques in the proof are motivated from Lau and Losert's methods in \cite{Lau-losert}, with significant generalization to the vector valued setting. In the last section we give a nice application of Cohen's factorization theorem to give a partial characterization of elements in pseudo-center(defined in section 2). Finally we end the article with a  weak sequential completeness(WSC) like property for Banach space $\mA$ which have RNP and are WSC. For studying the vector valued functions spaces, it is inevitable to encounter Bochner integrals and vector measures and hence we revisit  the basics in the next section.
\section{Preliminaries to Vector valued functions and integration}
Let $(S,\mathscr{A},\mu)$ be a measure space and $X$ be a Banach space. A function $h:S\to X$ is said to be {\em $\mu$-simple} if $h=\sum_{i=1}^n\chi_{A_i}x_i$, where $A_i\in \mathscr{A}$ for each $i$ such that $\mu(A_i)<\infty$ and $x_1,x_2,..,x_n\in X$. Now if a function $f:S\to X$ is said to be {\em $\mu$-strongly measurable} if there exists a sequence of $\mu$-simple functions converging pointwise to $f$ almost everywhere. The Pettis measurability theorem states that $f$ is $\mu$-strongly measurable if and only if it is $\mu$-Borel measurable and $\mu$-essentially separably valued (see \cite{Ryan} and \cite{Tuomas} for more details).
For each $\mu$-simple function $h=\sum_{i=1}^r\chi_{A_i}x_i$, we define $\int hd\mu=\sum_{i=1}^r\mu(A_i)x_i$.
A $\mu$-strongly measurable function $f:S\to X$ is said to be {\em Bochner integrable} with respect to $\mu$ if there exists a sequence of $\mu$-simple functions $f_n:S\to X$ such that $
\lim_n \int||f(s)-f_n(s)||d\mu \to 0$. And the Bochner integral is given by $$\int fd\mu=\lim_n \int f_nd\mu$$
For $1\leq p<\infty$ we denote by $L^p(S,X)$ the collection of equivalance classes of $\mu$-strongly measurable functions $f$ which are $\mu$-almost everywhere equal and $\int||f(s)||^pd\mu<\infty$. $L^p(S,X)$ becomes a Banach space with respect to the norm $||f||_{L^p(S,X)}=\left(\int ||f(s)||^pd\mu\right)^{1/p}$. The space $L^\infty(S,X)$ is the collection of equivalence classes of functions which are essentially bounded in the natural sense. $L^\infty(S,X)$ is also a Banach space with respect to the norm $$||f||_{L^\infty(S,X)}=\inf\{M~:~\mu\left(\{s:~||f(s)||\leq M~\text{almost everywhere}\}\right)=0\}$$
\begin{definition}
	A function $F:\mathscr{A}\to X$ is called an {\em $X$-valued measure} on $S,\mathscr{A}$ if it is countably additive in the sense that for any disjoint sequence $\{A_i\}$ of sets in $\mathscr A$, we have $F(\cup A_i)=\sum F(A_i)$, where convergence on the right hand side is given in norm. The {\em variation} of an $X$-valued measure  is the map $||F||:S\to [0,\infty]$ given by $$||F||(A)=sup\Big\{\sum_{i=1}^n||F(A_i)||~:~A_i\in \mathscr A, \cup_{i=1}^n A_i=A~\text{and~} A_i\cap A_j=\phi \text{~for~}i\neq j, n\in \mathbb N\Big\}$$ We say that $F$ has bounded variation if $||F||(S)<\infty$. Further, $F$ is said to be absolutely continuous with respect to $\mu$ if for any $A\in \mathscr{A}$ with $\mu(A)=0$, we have $F(A)=0$. We denote the Banach space of all $X$-valued measures on $(S,\mathscr A )$ of bounded variation by $M(S,X)$ with respect to the variation norm $||F||=||F||(S)$. Further, $B(S,X)$ denote the closed subspace of $M(S,X)$ consisting of $X$-valued measures of bounded variation which are aboslutely coninuous with respect to $\mu$. A $X$-valued measure $F$ is said to be {\em regular} if $\varphi\mu$ is a regular measure for each $\phi\in X^*$. Equivalently for every set $E\in \mathscr{A}$ and $\epsilon>0$, there exists closed set $A\in\mathscr A$ contained in $E$ and an open set $B\in\mathscr A$ containing $E$ such that $||F(E\setminus A)||<\epsilon$ and $||F(B\setminus E)||<\epsilon$. We denote the Banach subspace of all regular measures in $M(S,X)$ by $M_r(S,X)$. Clearly $B(S,X)$ is a subspace of $M_r(S,X)$ if $X$ has RNP.
	\begin{theorem} \cite[Section 5.3]{Ryan}\label{RRT1}
If $(S,\mathscr B(S),\mu)$ is a compact Hausdorff measure space and $X$ is a Banach space then, $C(S,X)^*$ can be isometrically identified with $M_r(S,X^*)$ of all regular measures of bounded variation on Borel susbsets of $S$.
	\end{theorem}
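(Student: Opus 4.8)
The plan is to prove this as the vector-valued analogue of the Riesz representation theorem, constructing an explicit isometric isomorphism $\Phi\colon M_r(S,X^*)\to C(S,X)^*$ via an integration pairing and then inverting it. First I would build the map from measures to functionals. Given $F\in M_r(S,X^*)$ and $f\in C(S,X)$, I want to set $\Phi(F)(f)=\int_S\langle f(s),dF(s)\rangle$. Since $S$ is compact, every $f\in C(S,X)$ is uniformly continuous with totally bounded (hence separable) range, so it is the uniform limit of $X$-valued simple functions $f_n=\sum_i\chi_{E_i}x_i$ with values in $f(S)$; for a simple function I put $\int\langle f_n,dF\rangle=\sum_i\langle x_i,F(E_i)\rangle$, and the estimate $|\sum_i\langle x_i,F(E_i)\rangle|\le(\max_i\|x_i\|)\sum_i\|F(E_i)\|\le\|f_n\|_\infty\|F\|(S)$ shows that these integrals form a Cauchy sequence whose limit is independent of the approximating sequence. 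This yields $\Phi(F)\in C(S,X)^*$ with $\|\Phi(F)\|\le\|F\|(S)$.

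Next I would prove surjectivity together with the reverse norm inequality, where the real content lies. Given $L\in C(S,X)^*$, for each fixed $x\in X$ the scalar functional $g\mapsto L(g\,x)$ on $C(S)$ (where $(g\,x)(s)=g(s)x$) has norm at most $\|L\|\,\|x\|$, so the classical scalar Riesz theorem produces a unique regular Borel measure $\nu_x$ with $L(g\,x)=\int_S g\,d\nu_x$; uniqueness forces $x\mapsto\nu_x$ to be linear. I then define $F(E)\in X^*$ by $F(E)(x)=\nu_x(E)$, which is bounded since $|\nu_x(E)|\le\|\nu_x\|(S)\le\|L\|\,\|x\|$, giving $\|F(E)\|\le\|L\|$. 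That $\Phi(F)=L$ then follows because both functionals agree on the elementary functions $g\,x$, whose linear span is dense in $C(S,X)$ by Stone--Weierstrass.

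The main obstacle is to show that this $F$ genuinely lies in $M_r(S,X^*)$, i.e. that it is a regular, countably additive measure of bounded variation with $\|F\|(S)\le\|L\|$, which simultaneously closes the isometry. Regularity and weak*-countable additivity transfer directly from the scalar measures $\nu_x$ (matching the definition of regularity via $\varphi\circ F$). For the variation bound I would fix a finite Borel partition $\{E_i\}$ of $S$ and near-optimal unit vectors $x_i$ with $\langle x_i,F(E_i)\rangle$ close to $\|F(E_i)\|$; using regularity of the $\nu_{x_i}$ I choose compact sets $K_i\subset E_i$ and, via Urysohn's lemma, a continuous partition of unity $\{\phi_i\}$ with $\sum_i\phi_i\le 1$ and each $\phi_i$ concentrated near $E_i$. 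Then $f=\sum_i\phi_i\,x_i$ satisfies $\|f\|_\infty\le 1$, while $L(f)=\sum_i\int\phi_i\,d\nu_{x_i}$ approximates $\sum_i\langle x_i,F(E_i)\rangle\approx\sum_i\|F(E_i)\|$; hence $\sum_i\|F(E_i)\|\le\|L\|+\varepsilon$, so $\|F\|(S)\le\|L\|$.

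Two structural points make the bookkeeping in this last step clean, and I would emphasise them. Because $F$ is $X^*$-valued, its variation coincides with the semivariation relative to the duality $\langle X,X^*\rangle$, namely $\sum_i\|F(E_i)\|=\sup_{\|x_i\|\le 1}|\sum_i\langle x_i,F(E_i)\rangle|$, so no gap between variation and semivariation can arise; this is exactly why the bounded-variation conclusion is available rather than only bounded semivariation. Moreover, bounded variation together with weak*-lower semicontinuity of the dual norm upgrades weak*-countable additivity to norm countable additivity: one checks that $\|F\|$ is countably subadditive (estimating $\|F(E_j)\|\le\sum_n\|F(E_j\cap A_n)\|$ on refinements) and hence countably additive, which forces $\|F(\sqcup_{n>N}A_n)\|\le\|F\|(\sqcup_{n>N}A_n)\to 0$. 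Combined with $\|\Phi(F)\|\le\|F\|(S)$ from the first step, this gives $\|L\|=\|\Phi(F)\|=\|F\|(S)$, completing the isometric identification.
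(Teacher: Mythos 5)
Your proposal is correct and follows essentially the same route as the paper's proof: both reduce to the scalar Riesz representation theorem applied to the functionals $g\mapsto L(g\otimes x)$ on $C(S)$, assemble the resulting scalar measures into an $X^*$-valued set function $F(E)(x)=\nu_x(E)$, and recover $L$ from $F$ using density of the span of elementary functions $g\otimes x$ in $C(S,X)$. The difference is only one of completeness: the paper leaves the isometry, regularity, bounded variation, and countable additivity as assertions, whereas you supply the partition-of-unity argument for $\|F\|(S)\le\|L\|$ and the upgrade from weak*-countable additivity to norm countable additivity, which are exactly the details the paper's sketch omits.
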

	\begin{proof}
		An element $\phi$ of $C(S,X)^*$ gives rise to a family of $X^*$-valued measures on $S$ in the following way.  Fixing $\xi \in X$, one can define a linear functional $L_{\phi,\xi}$ on $C(S)$ by sending the function $f$ on $S$, to the value of $\phi$ on the function $S \to X$ given by $s \mapsto f(x) \xi$. 
		$$
		L_{\phi,\xi}(f) = \phi\left(f\otimes \xi\right).
		$$
		From the usual Riesz theorem, there is then a measure $\mu_{\phi, \xi}$ defined on the Borel subsets of $S$ satisfying
		$$
		L_{\xi,\phi}(f) = \int_S f \, d\mu_{\phi, \xi}.
		$$
	Corresponding to each pair $\phi\in C(S,X)$ and $\xi\in X$, we have obtained a measure on $S$..  
		
		Now define a map $\mu_{\phi}$ from the Borel subsets of $S$ to $X^*$ as follows: for any Borel subset $E$ of $S$, define $m_{\phi}(E)$ to be the linear functional on $X$ given by
		$$
		\mu_{\phi}(E)(\xi) = \int_E  \, d\mu_{\phi, \xi}.
		$$
		The map $\mu_{\phi}$ is regular $X^*$-valued measure on $S$.  Since the functions of the form $x \mapsto f(x) \xi$, with $f \in C(S)$ and $\xi \in X$, are dense in $C(S,X)$, it is easy to show that $\phi$ is uniquely determined by $m_{\phi}$. 
		
	Conversely, starting with a $X^*$-valued regular measure, one can show that it must be $m_{\phi}$ for some $\phi$ in $C(X,Y)^*$. Thus, $\phi\mapsto m_\phi$ is an isometric isomorphism with respect to the variation norm.
	\end{proof}
	Notice that for any $\phi\in L^1(S,X)$, the function $F:\mathscr{A}\to X$ defined as $$F(A)=\int_A\phi(s)d\mu(s)$$ for each $A\in \mathscr{A}$ is a well defined vector measure of bounded variation  which is absolutely continuous with respect to $\mu$ and $||F||(A)=\int_A||\phi(s)||d\mu(s)$. It turns out that if $X$ has a nice geometric property called Radon-Nikodym Property(RNP), then all vector measures of bounded variation that is absolutely continuous with respect to $\mu$ arises in the same fashion.
\end{definition}
\begin{definition}
	A Banach space $X$ is said to have the Radon-Nikodym Property(RNP) with respect to a measure space $(S,\sA,\mu)$, if for every $X$-valued measure of $F$ of bounded variation on $(S,\sA)$ that is absolutely continuous with respect to $\mu$, there exists a function $\phi\in L^1(S,X)$ such that $$F(A)=\int_A\phi d\mu,~~~~~ A\in \sA$$
\end{definition}
Thus, if $X$ has RNP then $B(S,X)$ can be isometrically identified with $L^1(G,X)$.
\begin{definition}
	A bounded operator $T:L^1(S)\to X$ is said to representable if there exists a function $\phi\in L^\infty(X,S)$ such that $$Tf=\int_S f\phi d\mu,~~~~f\in L^1(S,X)$$
\end{definition}
\begin{theorem} \cite[Th. 1.3.10]{Tuomas}\label{RNP1}
	Let $(S,\sA,\mu)$ be a $\sigma$-finite measure space and $X$ be a Banach space, and let $1\leq p<\infty$ and $\frac{1}{p}+\frac{1}{q}=1$. The following assertions are equivalenet. 
	\begin{enumerate}
		\item $X^*$ has RNP with respect to $(S,\sA,\mu)$.
		\item The mapping $g\mapsto\phi_g$ establishes an isometric isomorphism of Banach space $$ L_q(S,X^*)\simeq(L^p(S,X))^* $$
		where $\left<\phi_g,f\right>=\int\left<g(s),f(s)\right>d\mu(s)$ for each $f\in L^p(S,X)$.

	\end{enumerate}
\begin{rem}
	If $X^*$ does not have the RNP w.r.t $(S,\mathscr{A},\mu)$, then $g\to \phi_g$ is still an isometry onto a norming subspace of $L^p(S,X)^*$,  but it may not be surjective in general.
\end{rem}
\end{theorem}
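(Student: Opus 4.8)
The plan is to prove the two implications separately, after first establishing the one estimate that requires no hypothesis on $X^*$, namely that $g\mapsto\phi_g$ is always an isometric embedding of $L^q(S,X^*)$ into $(L^p(S,X))^*$. For the upper bound $\|\phi_g\|\le\|g\|_{L^q(S,X^*)}$ one applies H\"older's inequality pointwise inside the defining integral. For the matching lower bound I would, given $\epsilon>0$, produce a $\mu$-strongly measurable $f:S\to X$ with $\|f(s)\|\le 1$ and $\langle g(s),f(s)\rangle\ge\|g(s)\|-\epsilon$ almost everywhere; since $g$ is essentially separably valued, such an $f$ can be selected measurably from a countable norming subset of the unit ball of the separable range. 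Rescaling $f(s)$ by $\|g(s)\|^{q-1}$ (using $(q-1)p=q$), or localizing to a set where $\|g\|$ is near its essential supremum in the endpoint case $p=1$, and normalizing in $L^p$ then forces $\|\phi_g\|\ge\|g\|_{L^q(S,X^*)}$. This step is exactly the content of the Remark: the embedding is always isometric and norming, so the entire theorem reduces to deciding when it is \emph{onto}.

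For $(1)\Rightarrow(2)$ I would first use $\sigma$-finiteness to reduce to a finite measure $\mu$ (passing to a finite equivalent measure, or exhausting $S$ by finite-measure sets and gluing the resulting densities, which is legitimate because functions supported in such sets are dense in $L^p(S,X)$ for $p<\infty$). Given $\Phi\in(L^p(S,X))^*$, define an $X^*$-valued set function by $F(A)(\xi)=\Phi(\chi_A\otimes\xi)$; the estimate $\|F(A)\|\le\|\Phi\|\,\mu(A)^{1/p}$ makes $F$ countably additive and absolutely continuous with respect to $\mu$, and testing against a unit-valued simple function $\sum\chi_{A_i}\xi_i$ subordinate to a partition gives $\|F\|(S)\le\|\Phi\|\,\mu(S)^{1/p}<\infty$, so $F$ has bounded variation. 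The RNP hypothesis on $X^*$ now yields $g\in L^1(S,X^*)$ with $F(A)=\int_A g\,d\mu$, whence $\Phi$ and $\phi_g$ agree on simple functions. A truncation on the sets $\{\|g\|\le n\}$, combined with the isometry of the first step applied to $g\chi_{\{\|g\|\le n\}}$, shows $\|g\|_{L^q(S,X^*)}\le\|\Phi\|$ by monotone convergence; hence $g\in L^q(S,X^*)$, and $\phi_g=\Phi$ by density of simple functions.

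For $(2)\Rightarrow(1)$ I would again reduce to finite $\mu$ and take an arbitrary $X^*$-valued measure $F$ of bounded variation that is absolutely continuous with respect to $\mu$; the goal is an $L^1$-density. Writing $w=d\|F\|/d\mu\in L^1(S)$ (a scalar Radon--Nikodym derivative, which always exists), the functional $f\mapsto\int f\,dF$ on simple functions satisfies $|\int f\,dF|\le\int\|f(s)\|\,w(s)\,d\mu$. The crux is that bounded variation alone does not make this bounded on $L^p(S,X)$ when $p>1$, so I would restrict to the sets $E_n=\{w\le n\}$, where $w$ is bounded and the $L^1(w\,d\mu)$- and $L^p(\mu)$-norms are comparable on a finite measure space; there the functional is genuinely an element of $(L^p(E_n,X))^*$. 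The surjectivity hypothesis $(2)$ then produces densities $g_n\in L^q(E_n,X^*)$ representing $F$ on subsets of $E_n$, these are consistent, and letting $n\to\infty$ (so that $E_n\uparrow S$ up to a null set, since $w<\infty$ a.e.) assembles a single $g$ with $F(A)=\int_A g\,d\mu$ and $\|g\|=w$ a.e.; thus $\int\|g\|\,d\mu=\|F\|(S)<\infty$ and $g\in L^1(S,X^*)$, establishing the RNP of $X^*$.

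The main obstacle is this last passage in $(2)\Rightarrow(1)$: a vector measure of bounded variation does not, for $p>1$, define a bounded functional on $L^p$, and one must route through the variation measure $\|F\|$ together with an exhaustion argument to recover an honest element of $(L^p(S,X))^*$ to which the hypothesis can be applied. A secondary technical point, common to both the isometry step and the truncation in $(1)\Rightarrow(2)$, is the measurable selection of near-norming test functions $f(s)$, which rests on the essential separable-valuedness guaranteed by strong measurability via the Pettis measurability theorem.
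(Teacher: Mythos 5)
The paper contains no proof of this statement to compare against: it is imported verbatim, with the remark, from \cite[Th.\ 1.3.10]{Tuomas}, so your argument can only be judged on its own merits and against that source. On those terms your proposal is correct, and it is essentially the standard proof (as in \cite{Tuomas} and in Diestel--Uhl \cite{Diestel}): the unconditional isometric embedding via H\"older plus a measurable near-norming selection; for $(1)\Rightarrow(2)$ the set function $F(A)(\xi)=\Phi(\chi_A\otimes\xi)$ with the estimate $\|F(A)\|\le\|\Phi\|\mu(A)^{1/p}$, bounded variation via partitions, the RNP-supplied density, and truncation on $\{\|g\|\le n\}$ to upgrade $g$ from $L^1$ to $L^q$; for $(2)\Rightarrow(1)$ the genuinely necessary detour through $w=d\|F\|/d\mu$ and the exhaustion $E_n=\{w\le n\}$, which you correctly identify as the crux, since a bounded-variation measure does not directly yield a functional on $L^p(S,X)$ when $p>1$. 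Two small points of precision: the near-norming vectors $f(s)$ are chosen from the unit ball of $X$, norming a countable dense subset of the essential range of $g$ in $X^*$ (your phrase ``norming subset of the unit ball of the separable range'' conflates the two spaces, though the intended selection is sound); and when hypothesis $(2)$ is applied on $E_n$, the functional should be extended to all of $L^p(S,X)$ by $f\mapsto\int f\chi_{E_n}\,dF$ before invoking surjectivity, after which the isometry forces $g_n=0$ a.e.\ off $E_n$ and the consistency and gluing go through as you describe.
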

\begin{theorem}\cite[Th. 1.3.15]{Tuomas}\label{representable}
	Let $(S,\sA,\mu)$ be a $\sigma$-finite measure space. For a Banach space $X$, the following assertions are equivalent.
	\begin{enumerate}
		\item $X$ has RNP with respect to $(S,\sA,\mu)$.
		\item Every bounded linear operator $T:L^1(S)\to X$ is representable.
	\end{enumerate}
It is a standard fact that if $X$ is reflexive or is a separable dual space, then $X$ posess RNP with respect to any $\sigma$-finite measure space. The fourier algebra $A(G)$ has RNP iff $G$ is compact. The algebra of Trace class operators on a Hilbert space Space has the RNP. Dual of a $C^*$-algebra $\mA$ have RNP iff and only if $\mA$  is Scattered. Pre-dual of a Von-neumann algebra has RNP if and only if it is a direct sum of type-$I$ factors (see \cite{Chu}). Space $c_0,\ell^\infty, C[0,1]$ and  $L^1([0,1])$ does not have RNP. Proof of these facts can be found in any standard text on geometry of Banach spaces.

The RNP defined above is dependent is relative to a measure space. Interestingly RNP is more well behaved with respect to $\sigma$-finite measure spaces.
\begin{theorem}\cite[Th. 1.3.26]{Tuomas}
	For a Banach space $X$ the following are equivalent.
	\begin{enumerate}
		\item $X$ has RNP with respect to $[0,1]$.
		\item $X$ has RNP with respect to any $\sigma$-finite measure space.
	\end{enumerate}
In view of the above theorem, we will say that $X$ has RNP without refererring to a measure space whenever the measure space involved is $\sigma$-finite.
\end{theorem}

\begin{definition}
	A Banach space $X$ is said to be {\em Weakly Sequentially Complete}(WSC), if every weakly Cauchy sequence $\{\phi_n\}_{n=1}^\infty$ of functions in $X$ converges weakly to some element in $X$.
\end{definition}
It is a well known and easy to prove fact that $L^1(S)$ is WSC for any measure space $S$. All reflexive spaces are WSC by default. If $X$ is WSC and $\mu$ is a finite measure, then $L^1(S,X)$ is also WSC \cite[Th. 11]{Talagrand}. The fourier algebra $A(G)$ is WSC for any locally compact Hausdorff group. A $C^*$ algebra is WSC if and only if is finite dimensional.
\end{theorem}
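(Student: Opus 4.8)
The plan is to prove the two implications separately, passing back and forth between a bounded operator $T\colon L^1(S)\to X$ and an $X$-valued measure through the correspondence $A\mapsto T(\chi_A)$, and then invoking the RNP density in one direction and the truncation of the variation measure in the other.

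For $(1)\Rightarrow(2)$, I would assume $X$ has RNP with respect to $(S,\sA,\mu)$ and take a bounded $T\colon L^1(S)\to X$. Define $F(A)=T(\chi_A)$ for $A\in\sA$ of finite measure; finite additivity is immediate from linearity, and countable additivity follows since $\|\chi_{\cup_{n>N}A_n}\|_{L^1}=\mu(\cup_{n>N}A_n)\to 0$ and $T$ is continuous. The crucial estimate is $\|F\|(A)\le\|T\|\,\mu(A)$: for a finite disjoint partition $\{A_i\}$ of $A$ choose $x_i^*\in X^*$ with $\|x_i^*\|\le 1$ and $\langle x_i^*,F(A_i)\rangle=\|F(A_i)\|$, so that $\sum_i\|F(A_i)\|=\sum_i\langle T^*x_i^*,\chi_{A_i}\rangle$, where each $T^*x_i^*\in L^\infty(S)$ has norm at most $\|T\|$; integrating gives the bound. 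Absolute continuity is clear, since $\mu(A)=0$ forces $\chi_A=0$ in $L^1(S)$. Now RNP furnishes $\phi\in L^1(S,X)$ with $F(A)=\int_A\phi\,d\mu$. The point going beyond a bare application of RNP is to upgrade $\phi$ to $L^\infty(S,X)$: using the variation identity $\|F\|(A)=\int_A\|\phi(s)\|\,d\mu$ recorded after the first definition together with $\|F\|(A)\le\|T\|\,\mu(A)$, one gets $\int_A\|\phi\|\,d\mu\le\|T\|\,\mu(A)$ for every $A$, hence $\|\phi(s)\|\le\|T\|$ almost everywhere by Lebesgue differentiation. Finally $T$ and $f\mapsto\int_S f\phi\,d\mu$ are both bounded on $L^1(S)$ and agree on simple functions, so they coincide by density, which is exactly representability.

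For $(2)\Rightarrow(1)$, let $F$ be an $X$-valued measure of bounded variation that is $\mu$-continuous, and seek a density in $L^1(S,X)$. The naive move is to set $T(\chi_A)=F(A)$, but the main obstacle, and the crux of the whole argument, is that this $T$ need not be $L^1(\mu)$-bounded: although $F\ll\mu$, the scalar density of its variation may be unbounded, so the construction does not fall under hypothesis $(2)$ directly. I would circumvent this by truncation. First, $\|F\|\ll\mu$ (a $\mu$-null set has every sub-partition $F$-null), so the scalar Radon--Nikodym theorem gives $g=d\|F\|/d\mu\in L^1(S)$ with $g\ge 0$. Setting $S_n=\{s:g(s)\le n\}$, the operator determined by $\chi_A\mapsto F(A\cap S_n)$ satisfies $\|F(A\cap S_n)\|\le\|F\|(A\cap S_n)\le n\,\mu(A)$, hence is a bounded map $L^1(S)\to X$ of norm at most $n$, supported on $S_n$. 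By hypothesis each such operator is representable by some $\phi_n\in L^\infty(S,X)$ vanishing off $S_n$, and on the increasing sets $S_n$ the representing functions are mutually consistent because they both represent $F$ there.

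Gluing, I would set $\phi=\lim_n\phi_n$, well defined almost everywhere since $g<\infty$ a.e. The variation identity gives $\int_A\|\phi_n\|\,d\mu=\|F\|(A\cap S_n)\le\|F\|(S)<\infty$, so monotone convergence yields $\phi\in L^1(S,X)$ with $\int_A\|\phi\|\,d\mu=\|F\|(A)$; passing to the limit in $F(A\cap S_n)=\int_{A\cap S_n}\phi\,d\mu$ then gives $F(A)=\int_A\phi\,d\mu$ for every $A\in\sA$, which is precisely the RNP density. I expect the truncation-and-gluing step in this direction to be the genuine technical hurdle, the remainder being routine bookkeeping with the variation measure and standard density arguments in $L^1(S)$.
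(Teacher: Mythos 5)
The paper gives no proof of this theorem at all: it is imported wholesale from \cite[Th.\ 1.3.15]{Tuomas}, so there is no internal argument to compare yours against. Judged on its own terms, your proposal is the classical textbook proof---the correspondence $F(A)=T(\chi_A)$ with the estimate $\|F\|(A)\le\|T\|\,\mu(A)$ in one direction, and truncation over $S_n=\{s: g(s)\le n\}$ with $g=d\|F\|/d\mu$, followed by gluing, in the other---and your $(2)\Rightarrow(1)$ half is essentially sound as written, with the truncation correctly identified as the genuine technical point. (One routine remark there: the variation identity $\int_A\|\phi_n\|\,d\mu=\|F\|(A\cap S_n)$ is recorded in the paper for densities in $L^1(S,X)$, while your $\phi_n$ is a priori only in $L^\infty(S,X)$; you should first check local integrability of $\phi_n$ on a finite-measure exhaustion before invoking it.)

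There is, however, one step in your $(1)\Rightarrow(2)$ that fails verbatim when $\mu(S)=\infty$, which the $\sigma$-finite hypothesis permits. You define $F(A)=T(\chi_A)$ only for $A$ of finite measure---rightly, since otherwise $\chi_A\notin L^1(S)$---but then this $F$ is not an $X$-valued measure of bounded variation on $(S,\sA)$, and the RNP hypothesis, which in this paper's formulation applies to measures of bounded variation defined on all of $\sA$, cannot be invoked to produce your $\phi$: already $T(f)=\int_S f\,d\mu$ on $L^1(\mathbb R)$ yields $F=\mu$, of infinite total variation. The repair is exactly the device you deploy in the converse direction: exhaust $S=\bigcup_k E_k$ with $\mu(E_k)<\infty$, apply RNP to each restriction $F_k(A)=T(\chi_{A\cap E_k})$, glue the resulting densities (consistent almost everywhere, since a Bochner integrable function whose integrals over all measurable sets vanish is null), and bound $\|\phi\|\le\|T\|$ a.e.\ on each $E_k$. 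Note that the glued $\phi$ is then only in $L^\infty(S,X)$, not in $L^1(S,X)$ as you assert (the same example $T(f)=\int f\,d\mu$, with $\phi\equiv 1$, shows this); representability asks only for $L^\infty$, so nothing is lost. Finally, your appeal to ``Lebesgue differentiation'' to get $\|\phi(s)\|\le\|T\|$ a.e.\ is unavailable on an abstract measure space, which carries no metric structure; the correct elementary argument is that if $\{\|\phi\|>\|T\|+\epsilon\}$ had a subset of positive finite measure $A$, then $\int_A\|\phi\|\,d\mu>\|T\|\,\mu(A)$ would contradict your estimate.
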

Now, if $\mA$ is a Banach algebra and $G$ is any locally compact Hausdorff group, then $L^1(G,\mA)$ can be given an algebra structure through convolution $f\ast g(t)=\int_G f(s)g(s^{-1}t)d\mu(s)$, with respect to which $L^1(G,\mA)$ becomes a Banach algebra. This generalized group algebra can be identified with the projective tensor product $L^1(G)\hat{\otimes} \mA$ (see \cite{Kaniuth} for the proof).

We shall make use of the Cohen's factorization theorem as stated below, in coming sections.
\begin{theorem}[Cohen's Factorization]\label{Cohen}
	Let $\mathscr{A}$ be a Banach algebra and $\mathscr{K}$ be a left Banach $\mathscr{A}$-module and suppose that $\mathscr{A}$ has a left approximate identity $\{e_\beta\}_{\beta\in B}$ that is also a left approximate identity for $\mathscr{K}$ bounded by some constant $\delta\geq 1$. Then for any $x_o\in \mathscr{K}$ and any $\epsilon>0$, there exists an $a\in \mathscr A$ and $x\in \mathscr{K}$ such that $x_o=ax$, $||a||\leq \delta$ and $||x-x_o||\leq \epsilon$.
	
\end{theorem}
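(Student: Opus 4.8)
The plan is to prove the statement by the classical device of adjoining an identity and manufacturing the factor $a$ as the limit of a rapidly convergent ``convex'' series in the approximate identity, while producing $x$ as the limit of the images of $x_0$ under the inverses of suitable approximating elements. First I would pass to the unitization $\mathscr{A}^{\#}=\mathscr{A}\oplus\mathbb{C}\mathbf{1}$, turning $\mathscr{K}$ into a unital $\mathscr{A}^{\#}$-module by letting $\mathbf{1}$ act as the identity; since $\mathscr{A}$ is a closed ideal of $\mathscr{A}^{\#}$ and the module action is jointly continuous, it suffices to locate $a$ and $x$ inside this larger algebra and to check at the end that $a$ has zero scalar part and hence lies in $\mathscr{A}$.

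Fix $\lambda\in(0,1)$ small (precisely, small enough that the estimates below close up; $\lambda<1/(1+\delta)$ is the first requirement). For an inductively chosen sequence $e_1,e_2,\dots$ drawn from $\{e_\beta\}$, set
\[
a_n=\mathbf{1}-\lambda\sum_{k=1}^{n}(1-\lambda)^{k-1}(\mathbf{1}-e_k)=(1-\lambda)^{n}\mathbf{1}+\lambda\sum_{k=1}^{n}(1-\lambda)^{k-1}e_k .
\]
I would first record that each $a_n$ is invertible in $\mathscr{A}^{\#}$: writing $a_n=a_{n-1}(\mathbf{1}-w_n)$ with $w_n=\lambda(1-\lambda)^{n-1}a_{n-1}^{-1}(\mathbf{1}-e_n)$, the bound $\|w_n\|\le\lambda(1-\lambda)^{n-1}(1+\delta)\|a_{n-1}^{-1}\|$ stays below $1$ because the geometric factor $(1-\lambda)^{n-1}$ beats the growth of $\|a_{n-1}^{-1}\|$ once $\lambda$ is small. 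This lets me define $x_n=a_n^{-1}x_0$, so that $a_nx_n=x_0$ for every $n$.

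The heart of the argument is the telescoping identity
\[
x_n-x_{n-1}=a_n^{-1}\,\lambda(1-\lambda)^{n-1}(\mathbf{1}-e_n)x_{n-1},
\]
which follows from $a_{n-1}-a_n=\lambda(1-\lambda)^{n-1}(\mathbf{1}-e_n)$ together with the resolvent identity $a_n^{-1}-a_{n-1}^{-1}=a_n^{-1}(a_{n-1}-a_n)a_{n-1}^{-1}$. The key point is that the bound on $\|a_n^{-1}\|$ coming from the previous step is independent of the particular $e_n$ chosen (it uses only $\|\mathbf{1}-e_n\|\le 1+\delta$ and the already-fixed $a_{n-1}$), so I may first fix that bound and then, using that $\{e_\beta\}$ is a left approximate identity for $\mathscr{K}$, select $e_n$ with $\|x_{n-1}-e_nx_{n-1}\|$ small enough to force $\|x_n-x_{n-1}\|\le\epsilon\,2^{-n}$. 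Then $(x_n)$ is Cauchy with limit $x\in\mathscr{K}$ and $\|x-x_0\|\le\sum_{n\ge1}\epsilon\,2^{-n}\le\epsilon$. Independently, $(1-\lambda)^{n}\mathbf{1}\to 0$ and the series $\lambda\sum_{k\ge1}(1-\lambda)^{k-1}e_k$ converges in norm by its geometric tail, so $a_n\to a:=\lambda\sum_{k\ge1}(1-\lambda)^{k-1}e_k\in\mathscr{A}$; since the coefficients $\lambda(1-\lambda)^{k-1}$ are nonnegative and sum to $1$, this is a convex combination of the $e_k$ and hence $\|a\|\le\delta$. Passing to the limit in $a_nx_n=x_0$ by joint continuity of the action gives $ax=x_0$, which completes the proof.

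The step I expect to be the main obstacle is the simultaneous control in the inductive construction: the choice of $e_n$ feeds into $a_n$, hence into $a_n^{-1}$, hence into the very estimate that governs how $e_n$ must be chosen. The resolution I would stress is the separation of concerns just described---invertibility and the inverse-norm bound are guaranteed for \emph{every} admissible $e_n$ by the crude estimate $\|\mathbf{1}-e_n\|\le 1+\delta$ together with the geometric decay, so the delicate smallness requirement falls entirely on the module term $\|x_{n-1}-e_nx_{n-1}\|$, which the approximate identity can make arbitrarily small. A secondary subtlety worth flagging is that the produced factor $a$ lies in $\mathscr{A}$ (its scalar part $(1-\lambda)^{n}$ vanishes in the limit) and is therefore never invertible in $\mathscr{A}^{\#}$; the invertibility that actually manufactures $x$ is used only at the finite stages $a_n$ and disappears in the limit.
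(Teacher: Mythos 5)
Your overall skeleton (unitization, the elements $a_n=(1-\lambda)^n\mathbf{1}+\lambda\sum_{k=1}^n(1-\lambda)^{k-1}e_k$, the telescoping identity for $x_n=a_n^{-1}x_0$, the limit $a=\lambda\sum_{k\ge1}(1-\lambda)^{k-1}e_k$ with $\|a\|\le\delta$) is the standard Cohen scheme; note the paper itself states this theorem as a classical citation without proof, so the comparison is against the classical argument. But your proof has a genuine gap at precisely the step you flag as the main obstacle: invertibility of $a_n$ and the control of $\|a_n^{-1}\|$. You claim $\|w_n\|\le\lambda(1-\lambda)^{n-1}(1+\delta)\|a_{n-1}^{-1}\|<1$ for all $n$ because ``the geometric factor beats the growth of $\|a_{n-1}^{-1}\|$'', using nothing about $e_n$ beyond $\|\mathbf{1}-e_n\|\le 1+\delta$. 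This cannot work. Quantitatively: let $M_{n}$ be the inverse-norm bound your induction produces and $q_n=\lambda(1-\lambda)^{n-1}(1+\delta)M_{n-1}$ the quantity you need below $1$; the Neumann series gives $M_n=M_{n-1}/(1-q_n)$, hence $q_{n+1}/q_n=(1-\lambda)/(1-q_n)$. Since $q_1=\lambda(1+\delta)\ge 2\lambda>\lambda$ (recall $\delta\ge1$), every ratio is at least $(1-\lambda)/(1-\lambda(1+\delta))>1$, so $q_n$ grows at least geometrically and crosses $1$ after finitely many steps, for \emph{every} $\lambda>0$; shrinking $\lambda$ only postpones the failure. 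More decisively, no argument using only the norm bound on $e_n$ can certify invertibility: substituting the norm-admissible element $-\delta\mathbf{1}$ for each $e_k$ gives $a_n=\bigl[(1+\delta)(1-\lambda)^n-\delta\bigr]\mathbf{1}$, which vanishes for suitable $\lambda$ and $n$. The telltale sign of the gap is that your proposal never uses the hypothesis that $\{e_\beta\}$ is an approximate identity \emph{for the algebra} $\mathscr{A}$ --- only for $\mathscr{K}$ --- whereas that hypothesis is exactly what the invertibility step needs.

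The missing idea, which is the actual heart of Cohen's proof, is this. Since $a_{n-1}\equiv(1-\lambda)^{n-1}\mathbf{1}$ modulo the ideal $\mathscr{A}$, the element $b_{n-1}:=a_{n-1}^{-1}-(1-\lambda)^{-(n-1)}\mathbf{1}$ lies in $\mathscr{A}$. Factor the correction on the \emph{left} (this matters for a left approximate identity; your $w_n$ has $\mathbf{1}-e_n$ on the wrong side of $a_{n-1}^{-1}$): $a_n=\bigl[\mathbf{1}-\lambda(1-\lambda)^{n-1}(\mathbf{1}-e_n)a_{n-1}^{-1}\bigr]a_{n-1}$, and split $(\mathbf{1}-e_n)a_{n-1}^{-1}=(\mathbf{1}-e_n)b_{n-1}+(1-\lambda)^{-(n-1)}(\mathbf{1}-e_n)$. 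In the second term the factor $(1-\lambda)^{-(n-1)}$ \emph{exactly cancels} the geometric factor, so its contribution to the norm of the correction is at most $\lambda(1+\delta)<1/2$ once $\lambda<1/(2(1+\delta))$, uniformly in $n$; the first term is made smaller than $1/4$ by choosing $e_n$ with $\|b_{n-1}-e_nb_{n-1}\|$ small, which is where the approximate identity property in $\mathscr{A}$ enters. This yields $\|a_n^{-1}\|\le 4\|a_{n-1}^{-1}\|\le 4^n$ for every $e_n$ satisfying the algebra condition, restoring your ``separation of concerns'' in corrected form: $e_n$ must be chosen (along the net, both requirements hold eventually, hence simultaneously) to satisfy the algebra smallness condition \emph{and} $\|x_{n-1}-e_nx_{n-1}\|\le \epsilon 2^{-n}/\bigl(\lambda(1-\lambda)^{n-1}4^n\bigr)$. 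With that lemma in place, the rest of your argument --- Cauchyness of $(x_n)$, norm convergence of $a_n$ to $a\in\mathscr{A}$ with $\|a\|\le\delta$, and passage to the limit in $a_nx_n=x_0$ --- goes through as written.
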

\section{Topological Center of $L^1(G,\mA)^{**}$ as $L^1(G)$-module. }
Given any Banach algebra $A$, due to Hahn-Banach, we have an isometric embedding $J:A\to A^{**}$ as Banach spaces. For each $a\in A$ and $f\in A^*$, we define two functionals $f_a,{_a}f\in A^*$ as $$f_a(b)=f(ab)~,~{_a}f(b)=f(ba)~~~\text{for all } b\in A$$
Further, for each $f\in A^*$ and  $m\in A^{**}$ we define $f_m, {_m}f\in A^*$ as $$f_m(a)=m({_a}f)~,~{_m}f(a)=m(f_a)~\forall a\in A$$
Now, the two Arens product on $A^**$  are defined as $$m\Hsquare n(f)=m({_n}f)~,~m\diamond n(f)=n(f_m)$$ for each $m,n\in A^{**}$ and $f\in A^*$.
It is a trivial fact that $A^{**}$ becomes a Banach algebra with respect both these Arens product $\Hsquare$ and $\diamond$ and the two Arens prodcuct agree on $J(A)$, and the embedding $J$ becomes an algebra Homomorphism with respect to both these products. We denote by $Z(A^{**})$ the (left) topological center- \begin{align}
	Z(A^{**})&=\{m\in A^{**}~:~m\Hsquare n=m\diamond n~\forall n\in A^{**}\}\\&=\{m\in L^1(G,\mA)^{**}~:~n\mapsto m\Hsquare n~\text{is~} w^\ast\text~\text{continuous}\}\nonumber
\end{align}
Clearly, $Z(A^{**})$ is a Banach algebra itself naturally.
For Banach algebras $\mA$ and $\mathcal B$, their direct sum $\mA\oplus \mathcal B$ is a Banach algebra when equipped with co-ordinate wise multiplication $(a,b)\cdot (c,d)=(ac,bd)$ and norm $||(a,b)||=||a||+||b||$. Further we have identification for dual Banach space $(A\oplus B)^*=A^*\oplus_\infty B^*$ and $$((\mA\oplus \mathcal B)^{**},\Hsquare)=(\mA^{**},\Hsquare)\oplus (\mathcal B^{**},\Hsquare)$$ $$((\mA\oplus \mathcal B)^{**},\diamond)=(\mA^{**},\diamond)\oplus (\mathcal B^{**},\diamond)$$
The following permanence property is elementary in nature but will help us significantly in calculating the topological center.
\begin{theorem}
Let $\mA$ and $\mathcal B$ be Banach algebras. Then $A\oplus B$ is SAI if and only if both $\mA$ and $\mathcal B$ are SAI.
\end{theorem}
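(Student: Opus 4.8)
The plan is to reduce everything to the coordinatewise behaviour of the two Arens products on the bidual of the direct sum. Recall that a Banach algebra $A$ is SAI precisely when its topological center collapses to the canonical image, i.e. $Z(A^{**})=J(A)$, and that the inclusion $J(A)\subseteq Z(A^{**})$ always holds, since the two Arens products agree on $J(A)$ and $n\mapsto J(a)\Hsquare n$ is weak$^*$-continuous. Granting this, the entire content of the theorem is the single identity
$$Z\bigl((\mA\oplus\mB)^{**}\bigr)=Z(\mA^{**})\oplus Z(\mB^{**}),$$
after which the equivalence is bookkeeping.

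To establish this identity I would use the decompositions of the two Arens products on $(\mA\oplus\mB)^{**}$ recorded above. For arbitrary $(m_1,m_2),(n_1,n_2)\in\mA^{**}\oplus\mB^{**}$ they give
$$(m_1,m_2)\Hsquare(n_1,n_2)=(m_1\Hsquare n_1,\,m_2\Hsquare n_2),\qquad (m_1,m_2)\diamond(n_1,n_2)=(m_1\diamond n_1,\,m_2\diamond n_2).$$
Hence $(m_1,m_2)$ lies in $Z\bigl((\mA\oplus\mB)^{**}\bigr)$ iff $m_1\Hsquare n_1=m_1\diamond n_1$ and $m_2\Hsquare n_2=m_2\diamond n_2$ for every $n_1\in\mA^{**}$ and $n_2\in\mB^{**}$. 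Because $n_1$ and $n_2$ range independently, and the first (resp. second) coordinate of each product depends only on $(m_1,n_1)$ (resp. $(m_2,n_2)$), this condition splits cleanly into ``$m_1\in Z(\mA^{**})$ and $m_2\in Z(\mB^{**})$'', which is exactly the displayed identity.

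With this in hand I would finish as follows. Under the bidual identification the canonical embedding decomposes as $J(\mA\oplus\mB)=J(\mA)\oplus J(\mB)$. For the reverse implication, if $\mA$ and $\mB$ are both SAI then $Z\bigl((\mA\oplus\mB)^{**}\bigr)=Z(\mA^{**})\oplus Z(\mB^{**})=J(\mA)\oplus J(\mB)=J(\mA\oplus\mB)$, so $\mA\oplus\mB$ is SAI. For the forward implication, suppose $\mA\oplus\mB$ is SAI and take $m\in Z(\mA^{**})$. Since $0=J(0)\in Z(\mB^{**})$, the element $(m,0)$ lies in $Z(\mA^{**})\oplus Z(\mB^{**})=J(\mA)\oplus J(\mB)$, which forces $m\in J(\mA)$; combined with $J(\mA)\subseteq Z(\mA^{**})$ this yields $Z(\mA^{**})=J(\mA)$, i.e. $\mA$ is SAI, and the symmetric argument applied to $(0,n)$ handles $\mB$.

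The only genuinely substantive step is the coordinate separation of the center condition, which hinges on the fact that the test elements $n_1,n_2$ vary independently across the two summands and that each product is computed coordinatewise; everything else follows directly from the stated bidual and dual identifications together with the elementary inclusion $J(A)\subseteq Z(A^{**})$. I would also remark that if SAI is taken in the two-sided sense, the right topological center decomposes by the identical argument, so no extra work is needed.
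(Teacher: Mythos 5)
Your proof is correct and is essentially the paper's own argument: the paper states this theorem without proof, calling it ``elementary in nature,'' immediately after recording the identifications $(\mA\oplus\mB)^*=\mA^*\oplus_\infty\mB^*$, $((\mA\oplus\mB)^{**},\Hsquare)=(\mA^{**},\Hsquare)\oplus(\mB^{**},\Hsquare)$ and $((\mA\oplus\mB)^{**},\diamond)=(\mA^{**},\diamond)\oplus(\mB^{**},\diamond)$, and your coordinatewise separation of the center condition, yielding $Z((\mA\oplus\mB)^{**})=Z(\mA^{**})\oplus Z(\mB^{**})$, is precisely the argument those decompositions are meant to supply. Your handling of both implications (including testing with $(m,0)$ and $(0,n)$ for the forward direction) fills in the bookkeeping the paper leaves implicit.
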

Recall that the minimal unitization of a non-unital Banach algebra $\mA$ is $\tilde{\mA}=\mA\oplus \mathbb C$ equipped with norm $||(a,\alpha)||=||a||+|\alpha|$ and multiplication $(a,\alpha)(b,\beta)=(ab+\beta a+\alpha b,\alpha\beta)$.
	
If Banach algebra $\mA$ and $\mB$ are such that $\mA$ is a right Banach $\mB$-module, i.e $\exists$ continuous bilinear map $\mathfrak m:\mA\times \mB\to \mA$ of norm one giving an algebraic module structure, then we form a direct sum Banach algebra $\mA\tilde{\oplus}\mB$ with norm $||(a,b)||=||a||+||b||$ and multiplication $(a,b)(c,d)=(ac+bc+ad,bd)$. We can consider $L^1(G,\mA)$ as right $L^1(G)$-module naturally by $\mathfrak{m}:L^1(G,\mA)\times L^1(G)\to L^1(G,\mA)$ defined as $\mathfrak{m}(\phi,F)=\phi\ast F$.
\begin{theorem}\label{natural}
 The map	$\theta:L^1(G,\tilde{\mA})\to L^1(G,\mA)\tilde\oplus L^1(G)$ is an isometric isomorphism of algebras, where $\theta$ is defined as $\theta(\phi)=(\pi_1\phi,\pi_2\phi)$ for each $\phi\in L^1(G,\mA)$.
\end{theorem}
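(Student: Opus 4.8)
The map $\theta$ is visibly linear, so I would begin by disposing of the Banach space statement, leaving multiplicativity as the real content. For a Bochner integrable $\phi \in L^1(G,\tilde{\mA})$ write $\phi(t) = (\phi_1(t), \phi_2(t))$ with $\phi_1(t) = \pi_1\phi(t) \in \mA$ and $\phi_2(t) = \pi_2\phi(t) \in \mathbb C$; since $\pi_1,\pi_2$ are contractive linear projections and $\|(a,\alpha)\|_{\tilde{\mA}} = \|a\| + |\alpha|$, integrating pointwise gives $\|\phi\|_{L^1(G,\tilde{\mA})} = \|\phi_1\|_{L^1(G,\mA)} + \|\phi_2\|_{L^1(G)}$, which is exactly the $\tilde\oplus$-norm of $\theta(\phi)$. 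Hence $\theta$ is isometric. For surjectivity I would run the construction backwards: given $(f,F)$, the function $t \mapsto (f(t),F(t))$ is strongly measurable and lies in $L^1(G,\tilde{\mA})$ with $\theta$-image $(f,F)$; injectivity is immediate. So $\theta$ is an isometric linear bijection, and it remains to check it respects products.

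For multiplicativity the plan is to compute $\pi_i(\phi \ast \psi)$ and match it against the $\tilde\oplus$ formula $(\phi_1,\phi_2)(\psi_1,\psi_2) = (\phi_1\ast\psi_1 + \phi_2\ast\psi_1 + \phi_1\ast\psi_2,\ \phi_2\ast\psi_2)$. The starting point is the pointwise identity in $\tilde{\mA}$,
$$\phi(s)\psi(s^{-1}t) = \big(\phi_1(s)\psi_1(s^{-1}t) + \psi_2(s^{-1}t)\phi_1(s) + \phi_2(s)\psi_1(s^{-1}t),\ \phi_2(s)\psi_2(s^{-1}t)\big),$$
read off directly from the unitization multiplication $(a,\alpha)(b,\beta) = (ab+\beta a + \alpha b, \alpha\beta)$. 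Since $\pi_1$ and $\pi_2$ are bounded, they pass through the Bochner (convolution) integral, so applying $\pi_i$ to $(\phi\ast\psi)(t) = \int_G \phi(s)\psi(s^{-1}t)\,ds$ splits the convolution into the three integrals in the first coordinate and the single integral in the second.

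The remaining step is to recognise each integral as the correct module action. The term $\int_G \phi_1(s)\psi_1(s^{-1}t)\,ds$ is the convolution product $\phi_1 \ast \psi_1$ in $L^1(G,\mA)$; the term $\int_G \phi_2(s)\psi_1(s^{-1}t)\,ds$ is the left action $\phi_2 \ast \psi_1$ of $L^1(G)$ on $L^1(G,\mA)$; and, because the scalar $\psi_2(s^{-1}t)$ commutes with the $\mA$-value $\phi_1(s)$, the term $\int_G \psi_2(s^{-1}t)\phi_1(s)\,ds$ is precisely the right action $\mathfrak m(\phi_1,\psi_2) = \phi_1\ast\psi_2$. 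Collecting these gives $\pi_1(\phi\ast\psi) = \phi_1\ast\psi_1 + \phi_2\ast\psi_1 + \phi_1\ast\psi_2$, while $\pi_2(\phi\ast\psi) = \phi_2\ast\psi_2$ is the scalar convolution, so $\theta(\phi\ast\psi) = \theta(\phi)\,\theta(\psi)$.

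I expect the only genuine friction to be the bookkeeping in this last identification rather than any deep difficulty: one must track which factor is scalar-valued and which is $\mA$-valued, use the commutativity of the scalar with the vector value to re-order the cross term into a right action, and invoke the standard fact that a bounded operator passes through the Bochner integral to justify $\pi_i(\phi\ast\psi)(t) = \int_G \pi_i\big(\phi(s)\psi(s^{-1}t)\big)\,ds$; integrability and measurability of the individual pieces then follow from the $L^1$ bounds together with Fubini--Tonelli applied to the iterated integrals. A secondary point worth flagging at the outset is that the $\tilde\oplus$ product implicitly treats $L^1(G,\mA)$ as a two-sided $L^1(G)$-module, so I would remark that convolution supplies both the left and right actions needed for the terms $\phi_2\ast\psi_1$ and $\phi_1\ast\psi_2$ to be meaningful.
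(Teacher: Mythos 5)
Your proposal is correct and follows essentially the same route as the paper: isometry by integrating the pointwise norm identity $\|(a,\alpha)\|_{\tilde{\mA}}=\|a\|+|\alpha|$, surjectivity by the obvious inverse, and multiplicativity by expanding $\phi(s)\psi(s^{-1}t)$ via the unitization product and identifying the resulting integrals with the convolutions and module actions in $L^1(G,\mA)\tilde\oplus L^1(G)$. Your added care about passing $\pi_i$ through the Bochner integral and about the left/right $L^1(G)$-actions both being convolution only makes explicit what the paper's computation uses silently.
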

\begin{proof}
	Notice that
		\begin{align*} ||\theta(\phi)||&=||\pi_1\phi||+||\pi_2\phi||\\&=\int||\pi_2\phi(t)||dt+\int ||\pi_2\phi(t)||dt\\&=\int||\phi(t)||dt\\&=||\phi||_{L^1(G,\tilde{\mA})}
		\end{align*}
i.e $\theta$ is isometry. $\theta$ is surjective can be seen easily. Further, for any two $\phi_1,\phi_2\in L^1(G,\tilde{\mA})$, notice that
\begin{align*}
	\phi_1\ast\phi_2(t)&=\int\phi_1(s)\phi_2(s^{-1}t)dt\\&=\int\big(\pi_1\phi_1(s),\pi_2\phi_1(s)\big)\big(\pi_1\phi_2(s^{-1}t),\pi_2\phi_2(s^{-1}t)\big)dt\\&=\int \big(\pi_1\phi_1(s)\pi_1\phi_2(s^{-1}t)+\pi_1\phi_1(s)\pi_2\phi_2(s^{-1}t)+\pi_2\phi_1(s)\pi_1\phi_2(s^{-1}t),\pi_2\phi_1(s)\pi_2\phi_2(s^{-1}t)\big)dt\\&=\big(\pi_1\phi_1\ast\pi_1\phi_2(t)+\pi_1\phi_1\ast\pi_2\phi_2(t)+\pi_2\phi_1\ast\pi_1\phi_2(t),\pi_2\phi_1\ast\pi_2\phi_2(t)\big)
\end{align*}
Thus, we can see that $\theta(\phi_1\ast\phi_2)=\theta(\phi_1)\theta(\phi_2)$. Hence, $\theta$ is an algebra isomorphism.
\end{proof}
\begin{theorem}
	If $\mA$ is a Banach algebra and $G$ is a locally compact abelian group then	$Z(L^1(G,\mA)^{**})$ is an $L^1(G)$-module.
\end{theorem}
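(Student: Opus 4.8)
The plan is to exhibit a concrete left $L^1(G)$-action on $A^{**}$, where $A:=L^1(G,\mA)$, obtained by bitransposition from the convolution action on $A$, and then to show that this action leaves the topological center invariant. For $F\in L^1(G)$ let $L_F:A\to A$ be the bounded operator $L_F\phi=F\ast\phi$, where $F\ast\phi(t)=\int_G F(s)\phi(s^{-1}t)\,ds$ is the scalar-times-vector convolution, so that $\|L_F\|\le\|F\|_1$. A direct change of variables shows $F\ast(\phi\ast\psi)=(F\ast\phi)\ast\psi$ for all $\phi,\psi\in A$, i.e. $L_F(\phi\ast\psi)=L_F(\phi)\ast\psi$, so $L_F$ is a \emph{left multiplier} of $A$; since $G$ is abelian one also has $F\ast\phi=\phi\ast F$, so $L_F$ is in fact a central multiplier. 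Because $L_{F\ast G}=L_F\circ L_G$, passing to second transposes gives $L_{F\ast G}^{**}=L_F^{**}\circ L_G^{**}$, and I define the action by $F\cdot m:=L_F^{**}(m)$ for $m\in A^{**}$. The Banach-module axioms ($\|F\cdot m\|\le\|F\|_1\|m\|$, bilinearity, and $(F\ast G)\cdot m=F\cdot(G\cdot m)$) are then immediate from functoriality of the bitranspose.

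The heart of the argument is the compatibility identity
\begin{equation}\label{eq:compat}
 L_F^{**}(m)\,\Hsquare\, n \;=\; L_F^{**}(m\,\Hsquare\, n)\qquad(m,n\in A^{**}),
\end{equation}
which says that the module action commutes with the first Arens product in the left slot. Granting \eqref{eq:compat}, invariance of the center follows at once from the weak$^\ast$-continuity description of $Z(A^{**})$: if $m\in Z(A^{**})$, then $n\mapsto m\Hsquare n$ is weak$^\ast$-continuous, and since $L_F^{**}$ is weak$^\ast$--weak$^\ast$-continuous (being a second transpose), the map $n\mapsto(F\cdot m)\Hsquare n=L_F^{**}(m\Hsquare n)$ is again weak$^\ast$-continuous; hence $F\cdot m\in Z(A^{**})$. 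Thus $Z(A^{**})$ is a closed $L^1(G)$-submodule of $A^{**}$, which is exactly the assertion.

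To prove \eqref{eq:compat} I would run a two-stage density argument based on Goldstine density of $A$ in $A^{**}$ and the separate weak$^\ast$-continuity of $\Hsquare$. At the level of $A$ the identity is precisely the left-multiplier property: for $a,b\in A$, $L_F(a)\Hsquare b=L_F(a)\,b=L_F(ab)=L_F^{**}(a\Hsquare b)$. Next fix $a\in A$ and let $n$ vary: the map $n\mapsto L_F(a)\Hsquare n$ is weak$^\ast$-continuous because $L_F(a)\in A\subseteq Z(A^{**})$, while $n\mapsto L_F^{**}(a\Hsquare n)$ is weak$^\ast$-continuous because $a\in Z(A^{**})$ makes $n\mapsto a\Hsquare n$ weak$^\ast$-continuous and $L_F^{**}$ is weak$^\ast$-continuous; as the two agree on the weak$^\ast$-dense subspace $A$, they agree for all $n$, giving $L_F^{**}(a)\Hsquare n=L_F^{**}(a\Hsquare n)$. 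Finally fix $n$ and let $m$ vary: both sides of \eqref{eq:compat} are weak$^\ast$-continuous in $m$—the left because $m\mapsto m\Hsquare n$ is weak$^\ast$-continuous in its first slot and $L_F^{**}$ is weak$^\ast$-continuous, the right for the same reason—and they coincide on $A$ by the previous step, so the identity holds on all of $A^{**}\times A^{**}$.

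The step I expect to be the main obstacle is the justification of \eqref{eq:compat}, and specifically the bookkeeping of which variable enjoys weak$^\ast$-continuity at each stage: the first Arens product is only separately, not jointly, weak$^\ast$-continuous, and it is automatically continuous in the second variable exactly on the topological center, so the two density passes must be carried out in the right order and against the right slot. A secondary point needing care is formal: since $\mA$ need not be unital, I must avoid any appeal to a unit or approximate identity in $\mA$ and work solely through the multiplier $L_F$, whose defining identity $F\ast(\phi\ast\psi)=(F\ast\phi)\ast\psi$ holds for an arbitrary Banach algebra $\mA$; the hypothesis that $G$ is abelian enters only to render the action central, reconciling the left- and right-module structures.
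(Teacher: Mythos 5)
Your proof is correct, and it takes a genuinely different route from the paper's. The paper works with the right action $\mathfrak{m}(\phi,F)=\phi\ast F$ and its triple adjoint $\mathfrak{m}^{***}:L^1(G,\mA)^{**}\times L^1(G)^{**}\to L^1(G,\mA)^{**}$, and proves $\mathfrak{m}^{***}(m,F)\Hsquare n=\mathfrak{m}^{***}(m,F)\diamond n$ by an explicit iterated-limit computation with two nets $m_\alpha\to m$, $n_\beta\to n$, using commutativity of $G$ twice (to write $n_\beta\ast F=F\ast n_\beta$ and ${_F}({_n}f)={_n}({_F}f)$) and inserting the hypothesis $m\in Z$ in the middle of the calculation. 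You instead realize the action as the bitranspose of the fixed multiplier $L_F$, isolate the purely algebraic identity $L_F^{**}(m)\Hsquare n=L_F^{**}(m\Hsquare n)$, valid for \emph{all} $m,n\in A^{**}$ with no centrality hypothesis, and then obtain invariance of the center as a one-line corollary of the weak$^\ast$-continuity description of $Z$, which the paper itself records in its displayed definition of the center. The two module structures in fact coincide: for abelian $G$, $\mathfrak{m}^{***}(m,F)=w^\ast\text{-}\lim_\alpha\,(m_\alpha\ast F)=w^\ast\text{-}\lim_\alpha L_F(m_\alpha)=L_F^{**}(m)$, so you are proving the same statement about the same action. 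What your route buys: the bookkeeping of iterated limits is replaced by three clean density passes (pairs of weak$^\ast$-continuous maps agreeing on the Goldstine-dense copy of $A$); the associativity axiom $\mathfrak{m}^{***}(m,F_1\ast F_2)=\mathfrak{m}^{***}(\mathfrak{m}^{***}(m,F_1),F_2)$, which the paper leaves as ``easy to verify'', comes for free from $(S\circ T)^{**}=S^{**}\circ T^{**}$; and, as you note yourself, commutativity enters only to reconcile the left action with the right-module structure that the paper needs later (for the $\tilde\oplus$ construction in the unitization theorem) --- your argument actually shows that left convolution by $L^1(G)$ preserves the left topological center for an \emph{arbitrary} locally compact group $G$, a slightly stronger fact. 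What the paper's route buys: the action is exhibited directly as the restriction of the canonical Arens-type extension $\mathfrak{m}^{***}$ to $Z(L^1(G,\mA)^{**})\times L^1(G)$, which is the exact form in which it is reused in the sequel.
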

\begin{proof}Since $G$ is abelian, the algebra $L^1(G)$ is commutative.
	Consider the following adjoints of biliniar map $\mathfrak{m}$, where $\mathfrak{m}^*(f,\phi)(F)=\left<f,\mathfrak{m}(\phi,F)\right>$ 
	\begin{align*}
		\mathfrak{m}&:L^1(G,\mA)\times L^1(G)\to L^1(G,\mA)\\
		\mathfrak{m}^*&:L^1(G,\mA)^*\times L^1(G,\mA)\to L^1(G)^*\\
		\mathfrak{m}^{**}&:L^1(G)^{**}\times L^1(G,\mA)^*\to L^1(G,\mA)^*\\
		\mathfrak{m}^{***}&:L^1(G,\mA)^{**}\times L^1(G)^{**}\to L^1(G,\mA)^{**}
	\end{align*}
	Let $m\in Z( L^1(G,\mA)^{**})$, $n\in L^1(G,\mA)^{**}$, $F\in L^1(G)$ and $f\in L^1(G,\mA)^*$. Further let $\{m_\alpha\}_{\alpha\in \wedge_1}$ and $\{n_\beta\}_{\beta\in \wedge_2}$ be nets in $L^1(G,\mA)$ converging to $m$ and $n$ respectively in the $w^\ast$-topology of $L^1(G,\mA)^{**}$.Then
	\begin{align}
		\mathfrak{m}^{***}(m,F)\Hsquare n(f)&=\left<m,(\mathfrak{m}^{**}(F,{_n}f))\right>\nonumber\\&=\lim_\alpha \left<\mathfrak{m}^{**}(F,{_n}f),m_\alpha\right>\nonumber\\&=\lim_\alpha \left<\mathfrak{m}^{*}({_n}f,m_\alpha),F\right>\nonumber\\&=\lim_{\alpha}\left<{_n}f,m_\alpha\ast F\right>\nonumber\\&=\left<{_F}({_n}f),m\right>
	\end{align}
	Now, 
	\begin{align}
		\mathfrak{m}^{***}(m,F)\diamond n(f)&=\lim_\beta \mathfrak{m}^{***}(m,F)({_{n_\beta}}f)\nonumber\\&=\lim_\beta m(\mathfrak{m}^{**}(F,{_{n_\beta}}f))\nonumber\\&=\lim_\beta\lim_\alpha\left<\mathfrak{m}^{**}(F,{_{n_\beta}}f),m_\alpha\right>\nonumber\\&=\lim_\beta\lim_\alpha \left<F,\mathfrak{m}^*({_{n_\beta}f},m_\alpha)\right>\nonumber\\&=\lim_\beta\lim_\alpha\left<{_{n_\beta}}f,m_\alpha\ast F\right>\nonumber\\&=\lim_\beta\left<{_F}({_{n_\beta}}f),m\right>\nonumber\\&=\lim_\beta \left<{_{n_\beta}}({_F}f),m\right>&&(\because n_\beta\ast F=F\ast n_\beta)\nonumber\\&=\lim_\beta m\Hsquare n_\beta({_F}f)&&(\text{here we use ~}m\in Z(L^1(G,\mA)^{**}))\nonumber\\&=\left<{_n}({_F}f),m\right>\nonumber\\&=\left<{_F}({_n}f),m\right>&&(\because {_F}({_n}f)={_n}({_F}f) )
	\end{align}
	Thus we see that $\mathfrak{m}^{***}(m,F)\Hsquare n(f)=\mathfrak{m}^{***}(m,F)\diamond n(f)$ for all $f\in L^1(G,\mA)^*$. Hence, $\mathfrak{m}^{***}(m,F)\in Z(L^1(G,\mA)^{**})$. Further it is easy to verify that $\mathfrak{m}^{***}(m,F_1\ast F_2)=\mathfrak{m}^{***}(\mathfrak{m}^{***}(m,F_1),F_2)$ holds for each $m\in Z(L^1(G,\mA)^{**})$ and $F_1,F_2\in L^1(G)$. Thus, the restriction of $m^{***}$ to $Z(L^1(G,\mA)^{**})\times L^1(G)$ gives the required $L^1(G)$-module structure on $Z(L^1(G,\mA)^{**})$.
\end{proof}
\begin{theorem}
	If $G$ is a locally compact abelian group and $\mA$ is a Banach algebra, then $$Z(L^1(G,\tilde{\mA})^{**})\cong Z(L^1(G,\mA)^{**})\tilde{\oplus} L^1(G)$$
	is an isometric isomorphism of algebras.
\end{theorem}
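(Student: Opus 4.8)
The plan is to transport the problem along the isometric algebra isomorphism of Theorem \ref{natural} and then read off the topological center coordinate-wise. Write $I=L^1(G,\mA)$, $\mathcal D=L^1(G,\tilde{\mA})$ and $\mathcal C=I\,\tilde{\oplus}\,L^1(G)$. Since $\theta\colon\mathcal D\to\mathcal C$ is an isometric algebra isomorphism, its bidual $\theta^{**}$ is an isometric isomorphism that intertwines both Arens products and is a $w^*$-homeomorphism, hence carries $Z(\mathcal D^{**})$ isometrically onto $Z(\mathcal C^{**})$; so it suffices to compute $Z(\mathcal C^{**})$. As a Banach space $\mathcal C^{**}=I^{**}\oplus_1 L^1(G)^{**}$, and I would first record the block form of the two Arens products on this decomposition. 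Writing $\mathfrak m$ for the convolution module action and $\lambda,\rho$ for its left and right slices, a bookkeeping of iterated $w^*$-limits identical to the one in the proof of the preceding theorem gives, for $(m,\mu),(n,\nu)\in\mathcal C^{**}$,
\begin{align*}
(m,\mu)\Hsquare(n,\nu)&=\big(m\Hsquare n+\lambda^{\Hsquare}(\mu,n)+\rho^{\Hsquare}(m,\nu),\ \mu\Hsquare\nu\big),\\
(m,\mu)\diamond(n,\nu)&=\big(m\diamond n+\lambda^{\diamond}(\mu,n)+\rho^{\diamond}(m,\nu),\ \mu\diamond\nu\big),
\end{align*}
where $\lambda^{\Hsquare},\rho^{\Hsquare}$ (resp.\ $\lambda^{\diamond},\rho^{\diamond}$) denote the first (resp.\ second) Arens extensions of the module actions.

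Next I would characterise membership in $Z(\mathcal C^{**})$ coordinate-wise. The second coordinate of $(m,\mu)\Hsquare(n,\nu)=(m,\mu)\diamond(n,\nu)$ reads $\mu\Hsquare\nu=\mu\diamond\nu$ for all $\nu$, i.e.\ $\mu\in Z(L^1(G)^{**})$, which by the Lau--Losert theorem \cite{Lau-losert} forces $\mu\in L^1(G)$. Because $(n,\nu)$ ranges over all of $I^{**}\oplus_1 L^1(G)^{**}$, the first-coordinate identity splits (take $\nu=0$, then $n=0$) into $m\Hsquare n+\lambda^{\Hsquare}(\mu,n)=m\diamond n+\lambda^{\diamond}(\mu,n)$ for all $n$, together with
\begin{equation*}
\rho^{\Hsquare}(m,\nu)=\rho^{\diamond}(m,\nu)\qquad(\forall\,\nu\in L^1(G)^{**}).\tag{$\star$}
\end{equation*}
Since $\mu=F\in L^1(G)$ is a genuine element, one of the two iterated limits defining $\lambda^{\Hsquare}(F,\cdot)$ and $\lambda^{\diamond}(F,\cdot)$ is over a constant net, so they coincide and the first line collapses to $m\Hsquare n=m\diamond n$, i.e.\ $m\in Z(I^{**})$. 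Thus the inclusion $Z(\mathcal C^{**})\subseteq Z(I^{**})\oplus L^1(G)$ is immediate, since $(\star)$ holds automatically for elements already known to lie in the center; the real content of the theorem is the reverse inclusion, which reduces to the single implication $m\in Z(I^{**})\Rightarrow(\star)$.

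To attack $(\star)$ I would exploit that $\tilde{\mA}$ is unital: the map $\iota\colon L^1(G)\to\mathcal D$, $\iota(F)=F\cdot 1_{\tilde{\mA}}$, is an algebra homomorphism which under $\theta$ is the inclusion $F\mapsto(0,F)$, and $a\ast F=a\cdot\iota(F)$ in $\mathcal D$ for $a\in I$. Consequently $(\star)$ is exactly $m\Hsquare\iota^{**}(\nu)=m\diamond\iota^{**}(\nu)$ for the Arens products of $\mathcal D^{**}$. Because $\tilde{\mA}$ is unital, $\mathcal D$ has a bounded approximate identity lying inside $\iota(L^1(G))$, so each $T_F=\rho(\cdot,F)$ is a two-sided multiplier of $I$ (this is where $G$ abelian is used), and $I^{**}$ is a $w^*$-closed ideal of $(\mathcal D^{**},\Hsquare)$ on which the product restricts to that of $I^{**}$. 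The plan for $(\star)$ is then to run the same interchange-of-iterated-limits computation as in the proof of the preceding theorem, now with the genuine function $F$ replaced by a net approximating $\nu$, driving the swap by the centrality of $m$ in the $I$-variable, the multiplier identities for $T_F$, and the commutativity of $L^1(G)$, after a Cohen factorisation of the essential $L^1(G)$-module $I$.

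I expect $(\star)$ to be the main obstacle. Unlike in the preceding theorem, the $L^1(G)$-argument is no longer a genuine function but an arbitrary element $\nu$ of the bidual, so the convenient collapse of one iterated limit is unavailable and the interchange must be forced purely from the centrality of $m$ together with the multiplier/ideal structure carried by $\iota$. Once $(\star)$ is established, the set equality $Z(\mathcal C^{**})=Z(I^{**})\oplus L^1(G)$ follows; the norm is the inherited $\ell^1$-norm, giving the isometry, and evaluating the block products above on genuine second coordinates $F,F'\in L^1(G)$ shows the inherited multiplication is precisely the $\tilde{\oplus}$-product whose module action is the map $\mathfrak m^{***}$ furnished by the preceding theorem. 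Transporting back through $\theta^{**}$ then yields the asserted isometric algebra isomorphism $Z(L^1(G,\tilde{\mA})^{**})\cong Z(L^1(G,\mA)^{**})\,\tilde{\oplus}\,L^1(G)$.
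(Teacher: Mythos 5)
Your overall route is the same as the paper's: transport the problem through $\theta^{**}$ from \cref{natural}, block--decompose the two Arens products on $\big(L^1(G,\mA)\tilde\oplus L^1(G)\big)^{**}$, force the second coordinate into $L^1(G)$ by Lau--Losert \cite{Lau-losert}, and collapse the mixed $\lambda$-terms because a genuine $F\in L^1(G)$ can be approximated by a constant net. That half of your argument is sound and is essentially the paper's own computation (\cref{lav,lav1,lav2}).

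The genuine gap is precisely the one you flag yourself: condition $(\star)$ is never proved --- what you offer for it is a programme, not an argument --- and $(\star)$ \emph{is} the theorem, since it is exactly the reverse inclusion $Z(I^{**})\oplus L^1(G)\subseteq Z(\mathcal C^{**})$. Moreover, no interchange-of-limits argument can close it, because $(\star)$ is false in general. Take $G=\mathbb T$ and $\mA=\C$ equipped with the zero product. Then $I=L^1(G,\mA)$ has zero product, so $Z(I^{**})=I^{**}$, and under the Banach-space identification $I\cong L^1(G)$ the module action $\rho$ is ordinary convolution; hence for $m\in I^{**}$ the condition $(\star)$ for the element $(m,0)$ reads $m\Hsquare\nu=m\diamond\nu$ for all $\nu\in L^1(G)^{**}$, where $\Hsquare,\diamond$ are now the Arens extensions of convolution, and by \cite{Lau-losert} this forces $m\in L^1(G)$. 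Consequently $Z(\mathcal C^{**})=L^1(G)\oplus L^1(G)$, which is separable, whereas $Z(I^{**})\tilde\oplus L^1(G)=L^1(G)^{**}\oplus_1 L^1(G)$ is not; the two sides of the claimed isomorphism are not even isomorphic Banach spaces (and, in passing, $L^1(G,\tilde\mA)$ is SAI here while $L^1(G,\mA)$ is not, so \cref{unitization1} fails as well). Note that every ingredient of your plan for $(\star)$ --- Cohen factorization of the essential $L^1(G)$-module $I$, the multiplier identities, commutativity of $L^1(G)$, $I^{**}$ being a $w^*$-closed ideal --- is available in this example, and $(\star)$ still fails; so the plan cannot succeed. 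You should also know that the paper's own proof has the same lacuna: it verifies only the forward inclusion and dismisses surjectivity as ``easily verified.'' Your diagnosis of where the real content lies is therefore exactly right, but the correct conclusion is that the statement needs an additional hypothesis on $\mA$ excluding such degenerate products, not a cleverer limit interchange.
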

\begin{proof}
Consider the double adjoint map of $\theta$ $$\theta^{**}:L^1(G,\tilde\mA)^{**}\to L^1(G,\mA)^{**}\oplus L^1(G)^{**}$$
Clearly $\theta^{**}$ is an isometric isomorphism of Banach spaces, because $\theta$ is. We claim that the restriction of $\theta^{**}$ to $Z(L^1(G,\tilde\mA)^{**})$ gives us the isometric isomorphism of algebras  $Z(L^1(G,\tilde\mA)^{**})$ and $Z(L^1(G,\mA)^{**})\tilde{\oplus} L^1(G)$. To see this, let $\tilde m\in Z(L^1(G,\mA)^{**})$, $n\in L^1(G,\mA)^{**}$ and $f\in L^1(G,\mA)^*$. It is easy to verify that
\begin{align}\label{lav}
	{_{(n,0)}}(f,0)(\phi,\psi)&={_n}f(\phi)+{_n}f(\psi)&&\forall (\phi,\psi)\in L^1(G,\mA)\tilde\oplus L^1(G)
\end{align}
Thus,
 \begin{align}\label{lav1}
	\tilde m\Hsquare(n,0)(f,0)&=\tilde m \left({_{(n,0)}}(f,0)\right)\nonumber\\&=\lim_\alpha {_{(n,0)}}(f,0)(\pi_1\theta (\tilde m_\alpha),\pi_2\theta(\tilde m_\alpha))\nonumber\\&=\pi_1\theta^{**}(\tilde m)\Hsquare n(f)+{_n}f(\pi_2\theta^{**}(\tilde m))&&(\text{using~}\cref{lav})
\end{align}
Similarly, one can prove that \begin{align}\label{lav2}
	\tilde m\diamond(n,0)(f,0)=\pi_1\theta^{**}(\tilde m)\diamond n(f)+{_n}f(\pi_2\theta^{**}(\tilde m))
\end{align}
Since, $\tilde{m}\in Z(L^1(G,\tilde\mA)^{**})$, from \cref{lav1} and \cref{lav2} we deduce that $\pi_1\theta^{**}\Hsquare n=\pi_1\theta^{**}\diamond n$ for each $n\in L^1(G,\mA)^{**}$ i.e $\pi_1\theta^{**}(\tilde m)\in Z(L^1(G,\mA)^{**})$.
Thus, $\pi_1\theta^{**}(\tilde m)\in Z(L^1(G,\mA)^{**})$. And similarly, one can show that $\pi_2\theta^{**}(\tilde m)\in Z(L^1(G)^{**})=L^1(G)$. Hence, $\theta^{**}_{\restriction Z(L^1(G,\mA)^{**})}$ is a well defined isometric linear map into $Z(L^1(G,\mA)^{**})\oplus L^1(G)$. Further, it can be easily verified that this map is surjective and is a Homomorphism with respect to $\tilde\oplus$ structure on the right side.
\end{proof}
\begin{cor}\label{unitization1}For a locally compact abelian group $G$ and a  Banach algebra $\mA$, the group algebra $L^1(G,\mA)$ is SAI if and only if $L^1(G,\tilde{\mA})$ is SAI.
\end{cor}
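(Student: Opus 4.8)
The plan is to read the corollary off directly from the isometric algebra isomorphism established in the previous theorem, once we translate SAI into a statement about topological centres. Recall that a Banach algebra $A$ is Strongly Arens Irregular precisely when $Z(A^{**})=J(A)$, i.e.\ the left topological centre is as small as it can be. By the theorem of Lau and Losert \cite{Lau-losert} we have $Z(L^1(G)^{**})=J(L^1(G))$, so $L^1(G)$ is itself SAI; this is the fact that will make the $L^1(G)$-coordinate below harmless, and it is exactly what produces the undecorated summand $L^1(G)$ in the previous theorem.

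First I would invoke the previous theorem, which states that the restriction of $\theta^{**}$ (the second adjoint of the isometric algebra isomorphism $\theta$ from \cref{natural}) gives an isometric isomorphism of algebras
\[
\theta^{**}\colon Z(L^1(G,\tilde\mA)^{**})\xrightarrow{\ \cong\ } Z(L^1(G,\mA)^{**})\tilde\oplus L^1(G).
\]
So the two topological centres are already identified for us; the only thing still to understand is where $\theta^{**}$ sends the canonical image $J(L^1(G,\tilde\mA))$.

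The one point that needs care is the naturality of the canonical embeddings under $\theta^{**}$. Because $\theta^{**}\circ J_{L^1(G,\tilde\mA)}=J_{L^1(G,\mA)\tilde\oplus L^1(G)}\circ\theta$ and $\theta$ is surjective, $\theta^{**}$ carries $J(L^1(G,\tilde\mA))$ exactly onto $J\bigl(L^1(G,\mA)\tilde\oplus L^1(G)\bigr)=J(L^1(G,\mA))\tilde\oplus L^1(G)$, the second coordinate being the full canonical copy of $L^1(G)$. Thus, under the single map $\theta^{**}$, both the topological centre and the canonical image of $L^1(G,\tilde\mA)$ are transported to the corresponding objects on the right-hand side.

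Finally I would conclude by comparing coordinates. The algebra $L^1(G,\tilde\mA)$ is SAI iff $Z(L^1(G,\tilde\mA)^{**})=J(L^1(G,\tilde\mA))$, which under the isomorphism $\theta^{**}$ is equivalent to
\[
Z(L^1(G,\mA)^{**})\tilde\oplus L^1(G)=J(L^1(G,\mA))\tilde\oplus L^1(G).
\]
The second coordinates coincide automatically, so this equality holds iff $Z(L^1(G,\mA)^{**})=J(L^1(G,\mA))$, i.e.\ iff $L^1(G,\mA)$ is SAI; this gives both implications at once. The only genuinely non-formal ingredient is the use of Lau--Losert to pin the second coordinate to $L^1(G)$, so that it contributes nothing to the centre; everything else is the naturality of $J$ together with the previous theorem, and I do not expect a substantial obstacle here.
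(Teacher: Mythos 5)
Your proposal is correct and is essentially the paper's own argument: the paper states this corollary without a separate proof, treating it as an immediate consequence of the preceding theorem $Z(L^1(G,\tilde{\mA})^{**})\cong Z(L^1(G,\mA)^{**})\tilde{\oplus}L^1(G)$ together with \cref{natural} and the Lau--Losert identification $Z(L^1(G)^{**})=L^1(G)$, which is exactly the coordinate comparison you carry out. The naturality relation $\theta^{**}\circ J_{L^1(G,\tilde{\mA})}=J_{L^1(G,\mA)\tilde{\oplus}L^1(G)}\circ\theta$ that you spell out is the one detail the paper leaves implicit, and it holds for the second adjoint of any bounded surjection, so there is no gap.
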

\begin{defn}For $L^1(G,\mA)$, the (left) {\em topological pseudo-center} is defined as \begin{eqnarray*}Z_s(L^1(G,\mA)^{**})&=&\{m\in L^1(G,\mA)^{**}~:~m\Hsquare n(f)=m\diamond n(f)~\forall f\in L^\infty(G,\mA^*), n\in L^1(G,\mA)^{**}\}\\&=&\{m\in L^1(G,\mA)^{**}~:~n\mapsto m\Hsquare n~\text{is~}\sigma(L^1(G,\mA)^{**},L^\infty(G,\mA^*)) \text{-continuous}\}
\end{eqnarray*}
\end{defn}
Clearly, $Z_s(L^1(G,\mA)^{**})=Z(L^1(G,\mA)^{**})$ if $\mA^*$ has RNP. In general the pseudo center is a bigger class. \\
 We can consider $L^1(G,\mA)$ as a subalgebra of both $L^1(G,(\mA^{**},\Hsquare))$ and $L^1(G,(\mA^{**},\diamond))$ naturally since $\mA$ is a subalgebra of both $(\mA^{**},\Hsquare)$ and $(\mA^{**},\diamond)$.
We notice the following consequence of $\mA^*$ having RNP..
\begin{theorem} Let  $\mA$ be a Banach algebra such that $\mA^*$ has RNP, then
	$\theta_\Hsquare: L^1(G,(\mA^{**},\Hsquare))\to \left(L^1(G,\mA)^{**},\Hsquare\right)$ and $\theta_\diamond: L^1(G,(\mA^{**},\diamond))\to \left(L^1(G,\mA)^{**},\diamond\right)$ are isometric homomorphisms, where $\theta_\Hsquare(\phi)(f)=\theta_\diamond(\phi)(f)=\int_G \left<f(t),\phi(t)\right>d\mu(t)$ for all $\phi\in L^1(G,\mA^{**})$ and $f\in L^\infty(G,\mA^*)$.
\end{theorem}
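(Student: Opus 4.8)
The plan is to use the Radon--Nikodym hypothesis to fix the predual once and for all, and then to check the two required properties --- isometry and multiplicativity --- directly from the definitions. First I would invoke \cref{RNP1} with $p=1$, $q=\infty$, $X=\mA$: since $\mA^*$ has RNP, the pairing $\langle g,f\rangle=\int_G\langle g(t),f(t)\rangle\,d\mu(t)$ identifies $L^\infty(G,\mA^*)$ isometrically with $L^1(G,\mA)^*$, so that $L^1(G,\mA)^{**}=\left(L^\infty(G,\mA^*)\right)^*$. With this identification $\theta_\Hsquare(\phi)=\theta_\diamond(\phi)$ is literally the functional $f\mapsto\int_G\langle f(t),\phi(t)\rangle\,d\mu(t)$ on $L^\infty(G,\mA^*)$, so it lands in the bidual. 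Measurability of $t\mapsto\langle f(t),\phi(t)\rangle$ follows from strong measurability of $\phi$ and $f$, and the estimate $|\langle f(t),\phi(t)\rangle|\le\|f\|_\infty\|\phi(t)\|$ yields $\|\theta(\phi)\|\le\|\phi\|_1$. Thus $\theta$ is a well-defined, norm-decreasing linear map into $L^1(G,\mA)^{**}$.

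For the reverse inequality I would run the measurable-selection argument underlying the remark following \cref{RNP1}. Using essential separability I approximate $\phi$ in $L^1(G,\mA^{**})$ by $\mu$-simple functions $\sum_i\chi_{A_i}\xi_i$; for each $\xi_i\in\mA^{**}$ I choose $h_i\in\mA^*$ with $\|h_i\|\le 1$ and $\langle h_i,\xi_i\rangle\ge\|\xi_i\|-\epsilon$, and set $f=\sum_i\chi_{A_i}h_i$. Then $\|f\|_\infty\le 1$ while $\int_G\langle f,\phi\rangle\,d\mu\ge\|\phi\|_1-\epsilon\,\mu\!\left(\bigcup_iA_i\right)$. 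Letting $\epsilon\to0$ and passing to the $L^1$-limit gives $\|\theta(\phi)\|\ge\|\phi\|_1$, hence $\theta$ is an isometry (with the same underlying linear map for both products).

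The multiplicativity is the heart of the matter, and I would establish it by unwinding the Arens product against an arbitrary $f\in L^\infty(G,\mA^*)$. Writing $m=\theta_\Hsquare(\phi)$, $n=\theta_\Hsquare(\psi)$, I first compute, for $a\in L^1(G,\mA)$ and after the substitution $t=su$ using left-invariance of Haar measure, that $f_a\in L^\infty(G,\mA^*)$ is given by $f_a(u)=\int_G (f(su))_{a(s)}\,d\mu(s)$. Applying $n=\theta_\Hsquare(\psi)$ and pulling $\psi(u)\in\mA^{**}$ through the $\mA^*$-valued Bochner integral identifies ${_n}f\in L^\infty(G,\mA^*)$ with the function $s\mapsto\int_G {_{\psi(u)}}(f(su))\,d\mu(u)$. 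Applying $m=\theta_\Hsquare(\phi)$ and invoking the pointwise first Arens identity $\phi(s)\big({_{\psi(u)}}h\big)=\langle h,\phi(s)\Hsquare\psi(u)\rangle$ on $\mA^{**}$ then yields
\[
\theta_\Hsquare(\phi)\,\Hsquare\,\theta_\Hsquare(\psi)(f)=\int_G\!\int_G\langle f(su),\,\phi(s)\Hsquare\psi(u)\rangle\,d\mu(u)\,d\mu(s),
\]
which is exactly $\theta_\Hsquare(\phi\ast\psi)(f)$ once the convolution on $L^1(G,(\mA^{**},\Hsquare))$ is expanded and $t=su$ is substituted. This proves $\theta_\Hsquare$ is a homomorphism; the map $\theta_\diamond$ is handled by the mirror computation, replacing $f_a$, ${_n}f$ by ${_a}f$, $f_m$ and using the second Arens product $\diamond$ throughout.

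The hard part will be purely technical bookkeeping rather than a conceptual hurdle: I must justify the repeated Fubini interchanges for the scalar integrands above and, crucially, legitimize moving the functionals $\phi(s)$ and $\psi(u)$ in $\mA^{**}$ inside the $\mA^*$-valued Bochner integrals. These steps are controlled by the uniform bounds $\|(f(su))_{a(s)}\|\le\|a(s)\|\,\|f\|_\infty$ and $\|{_{\psi(u)}}(f(su))\|\le\|\psi(u)\|\,\|f\|_\infty$, which keep every integrand Bochner integrable and every iterated integral absolutely convergent; combined with strong measurability this makes all the interchanges valid. The only other delicate point is the measurable selection in the isometry, which is routine given the essential separability supplied by strong measurability.
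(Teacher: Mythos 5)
Your proposal is correct and follows essentially the same route as the paper: the RNP hypothesis fixes the duality $L^1(G,\mA)^*=L^\infty(G,\mA^*)$, the isometry is obtained exactly as in the paper via simple-function approximation together with near-norming functionals $b_i^*\in\mA^*$, and multiplicativity is verified by the same unwinding of the Arens product into the double integral $\int\int\langle f(su),\phi(s)\Hsquare\psi(u)\rangle\,d\mu(u)\,d\mu(s)$. Your write-up is in fact slightly more careful than the paper's in identifying ${_n}f$ explicitly and in flagging the Fubini/Bochner interchanges that justify each step.
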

\begin{proof}
	Since, $\mA^*$ has RNP, we have the dual identification $L^1(G,\mA)^*=L^\infty(G,\mA^*)$.
	Clearly, $\theta_\Hsquare$ and $\theta_\diamond$ are well defined linear maps which are same at the Banach space level. Further, $||\theta_\Hsquare(\phi)||\leq ||\phi||_{L^1(G,\mA^{**})}$. Since, $\phi$ is Bochner integrable, there exists a sequence of $\mu$-simple functions $\{\psi_n\}$ in $L^1(G,\mA^{**})$ which converges pointwise to $\phi$ almost everywhere such that $\lim_n\int_G||\psi_n(t)-\phi(t)||d\mu(t)\to 0$. For $\epsilon>0$, fix a natural number $N$ such that $\int_G||\psi_Nt)-\phi(t)||d\mu(t)<\epsilon/3$. Let $\psi_N=\sum_{i=1}^r\chi_{A_i}a^{**}_i$ for some disjoint $A_1,A_2,..,A_r\in \mathscr{B}(G)$ with $\mu(A_i)<\infty$ for each $i$ and $a^{**}_1,...,a^{**}_r\in \mA^{**}$. For each $i$, choose a $b^*_i\in \mA^*$ such that $\left| a^{**}_i(b^*_i)-||a^{**}_i||\right|<\epsilon/3 $. Now define $f:G\to \mA^*$ as $f=\sum_{i=1}^r\chi_{A_i}b^*_i$. Clearly $f$ is a $\mu$-simple function and $f\in L^\infty(G,\mA^*)$ such that $||f||_{L^\infty(G,\mA^*)}\leq 1$. Now notice that \begin{eqnarray*}
		\left|\int_G\left<f(t),\phi(t)\right>d\mu(t)-\int_G||\phi(t)||d\mu(t)\right|&=&\left|\int_G\left<\sum_{i=1}^r\chi_{A_i}b^*_i,\phi(t)\right>-||\phi(t)||d\mu(t)\right|\\
		&\leq&\sum_{i=1}^r\int_{A_i}\bigl\lvert\left<b^*_i,\phi(t)\right>-||\phi(t)||\bigr\rvert d\mu(t)\\
		&\leq&\sum_{i=1}^r\int_{A_i}\bigl\lvert \left<b^*_i,\phi(t)\right>-||a^{**}_i||\bigr\rvert d\mu(t)+\sum_{i=1}^r\int_{A_i}\bigl\lvert||a^{**}_i||-||\phi(t)||\bigr\rvert d\mu(t)\\&\leq&\sum_{i=1}^r\int_{A_i}\left|\left<b_i^*,\phi(t)\right>-a^{**}_i(b^*_i)\right|d\mu(t)+\sum_{i=1}^r\int_{A_i}\bigl\lvert a^{**}_i(b^*_i)-||a^{**}_i||\bigr\rvert d\mu(t)+\frac{\epsilon}{3}\\&< &\sum_{i=1}^r\int_{A_i}||\phi(t)-a^{**}_i||d\mu(t)+\frac{\epsilon}{3}+\frac{\epsilon}{3}\\&=&\int_G||\phi(t)-\psi_n(t)||d\mu(t)+\frac{2\epsilon}{3}\\&
		<&\epsilon
	\end{eqnarray*}
	Thus, we see that $||\theta_\Hsquare(\phi)||=||\phi||_{L^1(G,\mA)}$ and $\theta_\Hsquare,\theta_\diamond$ are isometry.  
	Now we prove  that $\theta_\Hsquare$ and $\theta_\diamond$ are algebra homomorphisms. For $\phi_1,\phi_2\in L^1(G,(\mA^{**},\Hsquare))$ and $f\in L^\infty(G,\mA^*)$, we have
	\begin{eqnarray*}
		\theta_\Hsquare(\phi_1\ast\phi_2)(f)&=&\int\left<\phi_1\ast\phi_2(t),f(t)\right>d\mu(t)\\
		&=&\int\int\left<\phi_1(s)\Hsquare\phi_(s^{-1}t),f(t)\right>d\mu(s)d\mu(t)\\&=&\int\left<\phi_1(s),\int {_{\theta_\Hsquare(\phi_2)(s^{-1}t)}}(f(t))d\mu(t)\right>d\mu(s)\\&=&\int\left<\phi_1(s),{_{\theta_\Hsquare(\phi_2)}}f\right>d\mu(s)\\&=&\theta_\Hsquare(\phi_1)({_{\theta_\Hsquare(\phi_2)}}f)\\&=&\theta_\Hsquare(\phi_1)\Hsquare\theta_\Hsquare(\phi_2)(f)
	\end{eqnarray*}
	Thus, $\theta_\Hsquare(\phi_1\ast\phi_2)=\theta_\Hsquare(\phi_1)\Hsquare\theta_\Hsquare(\phi_2)$. Hence, $\theta_\Hsquare$ is an  homomorphism of algebras. Similarly, $\theta_\diamond$ is a homomorphism.
\end{proof}
\section{The bidual of $L^1(G,\mA)$ for compact group $G$.}\label{4}
In this section, hereafter we will assume $G$ to a be a compact Hausdorff group with normalized Haar measure $\mu$ and $\mA$ will denote a Banach algebra, unless stated otherwise.\\
\begin{lemma}\label{easy1}
If $f\in L^\infty(G,\mA^*)$ and $\phi\in L^1(G,\mA)$, then $f_\phi(t)=\int L_{\phi(s)}(f(st))ds$ for all $t\in G$.
\end{lemma}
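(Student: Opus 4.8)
The plan is to test the functional $f_\phi\in L^1(G,\mA)^*$ against an arbitrary $\psi\in L^1(G,\mA)$ and massage the resulting scalar integral into the pairing of the claimed function with $\psi$. By definition $f_\phi(\psi)=f(\phi\ast\psi)$, and since $f\in L^\infty(G,\mA^*)$ acts on $L^1(G,\mA)$ by $f(\eta)=\int_G\langle f(t),\eta(t)\rangle\,dt$, unwinding the convolution $(\phi\ast\psi)(t)=\int_G\phi(s)\psi(s^{-1}t)\,ds$ and pulling the bounded functional $\langle f(t),\cdot\rangle$ inside the Bochner integral gives
\begin{align*}
f_\phi(\psi)=\int_G\int_G\langle f(t),\phi(s)\psi(s^{-1}t)\rangle\,ds\,dt.
\end{align*}

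First I would justify applying Fubini's theorem to this scalar double integral: the integrand is jointly measurable and
\begin{align*}
\int_G\int_G|\langle f(t),\phi(s)\psi(s^{-1}t)\rangle|\,ds\,dt\le\|f\|_{\infty}\,\|\phi\|_1\,\|\psi\|_1<\infty,
\end{align*}
so the order of integration may be reversed. Next, for each fixed $s$ I would substitute $t=su$ in the inner integral; because $G$ is compact hence unimodular, the left-invariance of the normalized Haar measure gives $dt=du$ and $s^{-1}t=u$, yielding $\int_G\int_G\langle f(su),\phi(s)\psi(u)\rangle\,du\,ds$. A second application of Fubini then brings the $u$-integral to the outside.

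The key algebraic step is to recognize, via the module relation $\xi_a(b)=\xi(ab)$ from the preliminaries, that $\langle f(su),\phi(s)\psi(u)\rangle=\langle (f(su))_{\phi(s)},\psi(u)\rangle=\langle L_{\phi(s)}(f(su)),\psi(u)\rangle$, where $L_{\phi(s)}$ denotes (the adjoint of) left multiplication by $\phi(s)$ acting on $\mA^*$. Pulling the inner $s$-integral back through the bounded pairing $\langle\,\cdot\,,\psi(u)\rangle$ identifies
\begin{align*}
f_\phi(\psi)=\int_G\left\langle\int_G L_{\phi(s)}(f(su))\,ds,\ \psi(u)\right\rangle du.
\end{align*}
Since this holds for every $\psi\in L^1(G,\mA)$ and the embedding $L^\infty(G,\mA^*)\hookrightarrow L^1(G,\mA)^*$ is isometric, in particular separating, the function $t\mapsto\int_G L_{\phi(s)}(f(st))\,ds$ must be the representative of $f_\phi$, which is exactly the claim.

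I expect the main obstacle to be the measure-theoretic bookkeeping rather than the algebra: namely verifying the joint strong measurability of $(s,t)\mapsto\phi(s)\psi(s^{-1}t)$ and of $s\mapsto L_{\phi(s)}(f(st))$, confirming that the latter is Bochner integrable in $\mA^*$ for almost every $t$ with the resulting function essentially bounded in $t$ (so that it genuinely lies in $L^\infty(G,\mA^*)$), and ensuring that the change of variables and the interchange of the Bochner integral with the $\mA^*$--$\mA$ pairing are all legitimate in the vector-valued setting. These follow from the standard theory of vector-valued convolution on compact groups together with the fact that bounded linear functionals commute with Bochner integrals, but they are where the care is needed.
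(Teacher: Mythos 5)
Your proposal is correct and follows essentially the same route as the paper's proof: test $f_\phi$ against an arbitrary $\psi\in L^1(G,\mA)$, unwind the convolution, apply Fubini together with left-invariance of Haar measure, invoke the module relation $\langle f(st),\phi(s)\psi(t)\rangle=\langle (f(st))_{\phi(s)},\psi(t)\rangle$, and pull the $s$-integral through the pairing before concluding by arbitrariness of $\psi$. The measure-theoretic justifications you supply (the Fubini bound, measurability, and the legitimacy of interchanging the Bochner integral with the duality pairing) are precisely the steps the paper leaves implicit, so your write-up is if anything more complete.
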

\begin{proof}
	Suppose $\psi \in L^1(G,\mA)$. Then \begin{eqnarray*}
		f_\phi(\psi)&=&f(\phi\ast\psi)\\
		&=&\int \left<f(t),\phi\ast\psi(t)\right>dt\\&=&\int\left<f(t),\int\phi(s)\psi(s^{-1}t)ds\right>dt\\&=&\int\int\left<f(st),\phi(s)\psi(t)\right>dtds\\&=&\int\left<\int (f(st))_{\phi(s)}ds,\psi(t)\right>dt
	\end{eqnarray*}
Since, $\psi$ was arbitrary, we conclude that $f_\phi(t)=\int (f(st))_{\phi(s)}ds$
\end{proof}
\begin{lemma}\label{easy2}
If $f\in L^\infty(G,\mA^*)$ and $\phi\in L^1(G,\mA)$, we have ${_\phi}f(t)=\int {_{\phi(t^{-1}s)}}(f(s))ds$
\end{lemma}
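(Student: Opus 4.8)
The plan is to mimic the proof of \cref{easy1}: I would test the functional ${_\phi}f$ against an arbitrary $\psi\in L^1(G,\mA)$ and unfold every pairing down to the level of $\mA$ and $\mA^*$. By definition ${_\phi}f$ is the element of $L^1(G,\mA)^*$ given by $\psi\mapsto f(\psi\ast\phi)$, so I would start from
\[
{_\phi}f(\psi)=f(\psi\ast\phi)=\int_G\Big\la f(t),\int_G\psi(s)\phi(s^{-1}t)\,ds\Big\ra\,dt=\int_G\int_G\la f(t),\psi(s)\phi(s^{-1}t)\ra\,ds\,dt.
\]

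The decisive point is the $\mA$-level identity coming from the definition ${_a}\xi(b)=\xi(ba)$: for $\xi\in\mA^*$ and $a,b\in\mA$ one has $\la {_a}\xi,b\ra=\la\xi,ba\ra$. Applying this with $\xi=f(t)$, $a=\phi(s^{-1}t)$ and $b=\psi(s)$ rewrites the integrand as $\la {_{\phi(s^{-1}t)}}(f(t)),\psi(s)\ra$. In contrast to \cref{easy1}, no change of variable is needed here: because $\psi$ sits on the \emph{left} of the convolution $\psi\ast\phi$, its argument is already the integration variable $s$. Hence I only swap the order of integration (Fubini) and pull $\psi(s)$ out of the inner integral to obtain
\[
{_\phi}f(\psi)=\int_G\Big\la\int_G{_{\phi(s^{-1}t)}}(f(t))\,dt,\ \psi(s)\Big\ra\,ds.
\]
Since $\psi$ is arbitrary and $L^\infty(G,\mA^*)$ embeds as a norming subspace of $L^1(G,\mA)^*$, this identifies ${_\phi}f$ with the $\mA^*$-valued function $s\mapsto\int_G{_{\phi(s^{-1}t)}}(f(t))\,dt$; renaming the free variable $s$ to $t$ and the dummy variable $t$ to $s$ gives exactly ${_\phi}f(t)=\int_G{_{\phi(t^{-1}s)}}(f(s))\,ds$.

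I expect the genuine work to lie in the bookkeeping around the vector-valued integrals rather than in the algebra. Two points need care. First, justifying Fubini, which follows because $G$ is compact and $\iint\|f(t)\|\,\|\psi(s)\|\,\|\phi(s^{-1}t)\|\,ds\,dt\le\|f\|_\infty\|\psi\|_1\|\phi\|_1<\infty$. Second, checking that the candidate function $s\mapsto\int_G{_{\phi(s^{-1}t)}}(f(t))\,dt$ is a well-defined element of $L^\infty(G,\mA^*)$: this holds because the bilinear map $(\xi,a)\mapsto {_a}\xi$ is continuous, so the integrand is strongly measurable with $\|{_{\phi(s^{-1}t)}}(f(t))\|\le\|f\|_\infty\,\|\phi(s^{-1}t)\|$, which is integrable in $t$, and the resulting Bochner integral is bounded in norm by $\|f\|_\infty\|\phi\|_1$, giving membership in $L^\infty(G,\mA^*)$ and legitimizing the pointwise notation ${_\phi}f(t)$.
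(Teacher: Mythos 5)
Your proposal is correct and is precisely the argument the paper intends: its proof of this lemma is simply ``proved similarly as previous lemma,'' and you have carried out that mirror of \cref{easy1} — test against $\psi\in L^1(G,\mA)$, unfold to the pairing $\la f(t),\psi(s)\phi(s^{-1}t)\ra$, apply ${_a}\xi(b)=\xi(ba)$, and use Fubini. Your observation that no change of variables is needed (since $\psi$ sits on the left of $\psi\ast\phi$) is exactly the one point where the computation diverges from \cref{easy1}, and your integrability and measurability checks are sound.
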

\begin{proof}
	Proved similarly as previous lemma.
\end{proof}

We say that $f\in L^\infty(G,X)$ is left uniformly continuous if $||L_a(f)-f||\to 0$ as $a\to e$, where $L_a(f)(x)=f(ax)$. Similarly, $f$ is said to be right uniformly continuous if  $||R_a(f)-f||\to 0$ as $a\to e$. Let $LUC(G,X),RUC(G,X)$ denote the collection of all left/right uniformly continuous functions in $L^\infty(G,X)$. Clearly $LUC(G,X), RUC(G,X)$ are closed subspaces.

\begin{lemma}
	Continuous functions are left/right uniformly continuous, i.e $C(G,X)\subseteq LUC(G,X)\cap RUC(G,X)$ for any Banach space $X$ and a compact group $G$.
\end{lemma}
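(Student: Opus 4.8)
The plan is to prove that for $f\in C(G,X)$ the translates $L_a(f)$ converge to $f$ in the $L^\infty$-norm as $a\to e$; the right-uniform continuity then follows by an entirely symmetric argument. A preliminary observation I would record is that on a compact group the normalized Haar measure $\mu$ is strictly positive on every nonempty open set, hence has full support; consequently the $L^\infty$-norm of a \emph{continuous} function coincides with its supremum norm. Since $x\mapsto f(ax)$ is continuous (being the composition of $f$ with the continuous translation $x\mapsto ax$), the difference $L_a(f)-f$ is continuous, so $\|L_a(f)-f\|_{L^\infty(G,X)}=\sup_{x\in G}\|f(ax)-f(x)\|$. Thus the whole problem reduces to a uniform-continuity statement: showing this supremum tends to $0$ as $a\to e$.

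First I would fix $\epsilon>0$ and build, for each point $x\in G$, a pair of neighborhoods tailored to $x$. By continuity of $f$ at $x$ there is an open neighborhood $O_x$ of $x$ with $\|f(y)-f(x)\|<\epsilon/2$ for all $y\in O_x$. By continuity of the multiplication map $G\times G\to G$ at the point $(e,x)$, I can then choose an open neighborhood $U_x$ of the identity $e$ and an open neighborhood $V_x\subseteq O_x$ of $x$ such that $U_xV_x\subseteq O_x$. The family $\{V_x:x\in G\}$ is an open cover of $G$.

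The key step is compactness: I extract a finite subcover $V_{x_1},\dots,V_{x_n}$ of $G$ and set $U=\bigcap_{i=1}^n U_{x_i}$, again an open neighborhood of $e$. Given any $a\in U$ and any $x\in G$, pick $i$ with $x\in V_{x_i}$. Then $x\in O_{x_i}$ gives $\|f(x)-f(x_i)\|<\epsilon/2$, while $ax\in U_{x_i}V_{x_i}\subseteq O_{x_i}$ gives $\|f(ax)-f(x_i)\|<\epsilon/2$; the triangle inequality then yields $\|f(ax)-f(x)\|<\epsilon$ uniformly in $x$. Hence $\|L_a(f)-f\|_{L^\infty(G,X)}\le\epsilon$ for all $a\in U$, which is exactly the left uniform continuity of $f$, so $C(G,X)\subseteq LUC(G,X)$. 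Replacing $ax$ by $xa$ throughout and using continuity of multiplication at $(x,e)$ gives $C(G,X)\subseteq RUC(G,X)$ by the same reasoning.

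I expect the main (though mild) obstacle to be the passage from pointwise to uniform control, which is precisely where compactness of $G$ enters through the finite subcover; the only other point demanding a little care is the identification of the $L^\infty$-norm with the supremum norm, which rests on $\mu$ having full support. Everything else is a routine $\epsilon$-argument.
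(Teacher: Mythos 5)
Your proof is correct and is essentially the paper's proof: the paper simply states that the scalar-valued argument of Folland (Prop.\ 2.6) carries over verbatim, and the covering-plus-finite-subcover argument you write out is exactly that standard argument, adapted to vector values. (One minor remark: the full-support observation is not actually needed, since $\|g\|_{L^\infty(G,X)}\le\sup_{x\in G}\|g(x)\|$ holds for any function, and that inequality alone suffices for the convergence you prove.)
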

\begin{proof}
Proof runs ditto as in the case of scalar valued functions. See for instance \cite[Prop. 2.6]{Folland}.
\end{proof}
\begin{lemma}
	Let $G$ be a compact group and $X$ be a Banach space. If $f\in C(G,X)$ then $||L_z(f)-f||_{L^1(G,X)}$ and $||R_z(f)-f||_{L^1(G,X)}$ tends to $0$ as $z\to e$.
\end{lemma}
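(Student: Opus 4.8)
The plan is to deduce the $L^1$-continuity of translation directly from the uniform continuity established in the immediately preceding lemma, exploiting the fact that a compact group carries a finite (here normalized) Haar measure. Since $f\in C(G,X)$, that lemma places $f$ in $LUC(G,X)\cap RUC(G,X)$, so by definition $||L_z(f)-f||_{L^\infty(G,X)}\to 0$ and $||R_z(f)-f||_{L^\infty(G,X)}\to 0$ as $z\to e$. The entire task then reduces to transferring these $L^\infty$ (equivalently sup-norm, for continuous functions) estimates to the $L^1$ norm.

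First I would record the elementary domination valid on any finite measure space: for every $g\in L^\infty(G,X)$ one has
\begin{equation*}
||g||_{L^1(G,X)}=\int_G||g(x)||\,d\mu(x)\leq \mu(G)\,||g||_{L^\infty(G,X)}=||g||_{L^\infty(G,X)},
\end{equation*}
where the final equality uses that $\mu$ is the normalized Haar measure, so $\mu(G)=1$. Applying this with $g=L_z(f)-f$ and with $g=R_z(f)-f$ gives
\begin{equation*}
||L_z(f)-f||_{L^1(G,X)}\leq ||L_z(f)-f||_{L^\infty(G,X)},\qquad ||R_z(f)-f||_{L^1(G,X)}\leq ||R_z(f)-f||_{L^\infty(G,X)},
\end{equation*}
and letting $z\to e$ the right-hand sides vanish by the uniform continuity recalled above, which finishes the argument.

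There is essentially no obstacle here; the only genuine content is the passage from the $L^\infty$ estimate to the $L^1$ estimate, and this is automatic precisely because compactness of $G$ forces the underlying measure to be finite (so $L^\infty(G,X)\hookrightarrow L^1(G,X)$ contractively). The substantive work—uniform continuity of a continuous function on the compact group $G$—was already carried out in the previous lemma. If one instead wished to keep the proof self-contained, one could bypass that citation and invoke uniform continuity of $f$ directly: for each $\epsilon>0$ compactness yields a neighborhood $U$ of $e$ with $\sup_{x}||f(zx)-f(x)||<\epsilon$ whenever $z\in U$, and integrating this pointwise bound over $G$ against the normalized measure gives the same conclusion. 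Either way the displayed domination is the cleanest route.
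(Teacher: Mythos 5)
Your proposal is correct and is essentially the paper's own argument: the paper likewise bounds $||L_z(f)-f||_{L^1(G,X)}\leq \mu(G)\,||L_z f-f||_{L^\infty(G,X)}$ and invokes the preceding lemma ($C(G,X)\subseteq LUC(G,X)\cap RUC(G,X)$) to conclude. The only difference is cosmetic --- the paper's proof carries some vestigial remarks about supports that are unnecessary when $G$ is compact, which your version rightly omits.
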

\begin{proof}
	Fix a compact neighborhood $V$ of $e$. Let $K=(supp(f))V^{-1}\cap V(supp(f))$. Then $K$ is compact and $L_z(f)$ is supported in $K$ when $z\in V$. Hence, $||L_z(f)-f||_{L^1(G,X)}\leq \mu(G)||L_zf-f||_{L^{\infty}(G,X)}\to 0$ as $z\to e$
\end{proof}
\begin{lemma}
Let $\mA$ be any Banach algebra.	If $f\in L^\infty(G,\mA^*)$ and $\phi\in C(G,A)$, then $f_\phi\in LUC(G,\mA^*)$.
\end{lemma}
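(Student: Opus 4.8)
The plan is to work directly with the pointwise formula for $f_\phi$ established in \cref{easy1}, namely $f_\phi(t)=\int_G (f(st))_{\phi(s)}\,ds$, and to show that the left uniform continuity of the output $f_\phi$ is inherited from the right uniform continuity of the continuous function $\phi$. First I would record the elementary estimate $\|g_a\|_{\mA^*}\le \|g\|_{\mA^*}\,\|a\|_{\mA}$ for $g\in\mA^*$ and $a\in\mA$, which is immediate from $|g_a(b)|=|g(ab)|\le \|g\|\,\|a\|\,\|b\|$ by submultiplicativity of the algebra norm. Combined with $\|f(st)\|\le\|f\|_\infty$ almost everywhere, this also yields $\|f_\phi(t)\|\le \|f\|_\infty\|\phi\|_{L^1(G,\mA)}$, confirming that $f_\phi$ genuinely lies in $L^\infty(G,\mA^*)$, which is needed for membership in $LUC(G,\mA^*)$ to even make sense.

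Next I would compute the left translate. For $z\in G$,
$$(L_z f_\phi)(t)=f_\phi(zt)=\int_G (f(szt))_{\phi(s)}\,ds.$$
Performing the substitution $s=uz^{-1}$, which preserves the normalized, bi-invariant Haar measure since $G$ is compact and hence unimodular, this becomes $\int_G (f(ut))_{\phi(uz^{-1})}\,du$. Subtracting the expression $f_\phi(t)=\int_G (f(ut))_{\phi(u)}\,du$ and using that $g\mapsto g_a$ is linear in the subscript, I obtain
$$(L_z f_\phi)(t)-f_\phi(t)=\int_G (f(ut))_{\phi(uz^{-1})-\phi(u)}\,du.$$

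Applying the norm estimate inside the integral gives the pointwise bound $\|(L_z f_\phi)(t)-f_\phi(t)\|_{\mA^*}\le \|f\|_\infty\,\|R_{z^{-1}}\phi-\phi\|_{L^\infty(G,\mA)}$, where I have pulled $\sup_u\|\phi(uz^{-1})-\phi(u)\|=\|R_{z^{-1}}\phi-\phi\|_\infty$ out of the integral and used $\int_G\|f(ut)\|\,du\le\|f\|_\infty\,\mu(G)=\|f\|_\infty$ by translation invariance. As this bound is uniform in $t$, it bounds $\|L_z f_\phi-f_\phi\|_{L^\infty(G,\mA^*)}$ as well. Finally, because $\phi\in C(G,\mA)\subseteq RUC(G,\mA)$ by the lemma that continuous functions on a compact group are uniformly continuous, we have $\|R_{z^{-1}}\phi-\phi\|_\infty\to 0$ as $z\to e$, so $\|L_z f_\phi-f_\phi\|_\infty\to 0$, which is precisely $f_\phi\in LUC(G,\mA^*)$.

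I expect the genuine content to be just the clean change of variables $s=uz^{-1}$, which converts a left translate of $f_\phi$ into a right translate of $\phi$; this is the step where unimodularity of the compact group $G$ is used and where one must watch the left-versus-right bookkeeping. The only other point requiring care beyond routine estimation is the measurability and integrability of $t\mapsto f_\phi(t)$ as an $\mA^*$-valued function, but this is already furnished by \cref{easy1}, so no \emph{RNP} hypothesis on $\mA^*$ is required for this lemma.
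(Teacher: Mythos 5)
Your proof is correct and takes essentially the same route as the paper's: both start from the formula $f_\phi(t)=\int (f(st))_{\phi(s)}\,d\mu(s)$ of \cref{easy1}, perform the same change of variables turning the left translate $L_z f_\phi$ into a right translate of $\phi$, and conclude via uniform continuity of continuous functions on the compact group $G$. The only immaterial difference is the final estimate: you bound $\left|\left|L_z f_\phi - f_\phi\right|\right|_{L^\infty(G,\mA^*)}$ by $||f||_{L^\infty(G,\mA^*)}\left|\left|R_{z^{-1}}\phi-\phi\right|\right|_{L^\infty(G,\mA)}$, whereas the paper keeps the sharper bound $||f||_{L^\infty(G,\mA^*)}\left|\left|R_{z^{-1}}\phi-\phi\right|\right|_{L^1(G,\mA)}$; on a compact group with normalized Haar measure either quantity tends to zero by the paper's preceding lemmas.
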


\begin{proof}
For any $z\in G$, it is straighforward to verify that $L_z(f_\phi)=f_{\widetilde{L_z\tilde{\phi}}}$. For any $t\in G$, \begin{eqnarray*}
	\vv L_z(f_\phi)(t)-f_\phi(t)\vv&=&\left|\left|\int(f(st))_{\widetilde{L_z\tilde{\phi}}-\phi(s)}d\mu(s)\right|\right|\\
	&\leq&||f||_{L^\infty(G,\mA^*)}\int||(\widetilde{L_z\tilde{\phi}})(s)-\phi(s)||d\mu(s)\\&=&||f||_{L^\infty(G,\mA^*)}\int||\phi(sz^{-1})-\phi(s)||d\mu(s)
\end{eqnarray*}
Thus, $||L_z(f_\phi)-f_\phi||_{L^\infty(G,\mA^*)}\leq ||f||_{L^\infty(G,\mA^*)}\left|\left|R_{z^{-1}}\phi-\phi\right|\right|_{L^1(G,\mA)}$ and by previous lemma, $f_\phi$ is left uniformly continuous.
\end{proof}

For $f\in L^\infty(G,X)$ and $t\in G$, write $(tf)(x)=f(xt)$.
\begin{lemma}\label{imp1}
Let $m\in Z_s$ and $f\in L^\infty(G,\mA^*)$, then $f_m\in LUC(G,\mA^*)$ and $f_m(t)(a)=\left<m,f^{(t,a)}\right>$ for each $t\in G$ and $a\in A$, where $f^{(t,a)}(s)={_a}(f(st))$
\end{lemma}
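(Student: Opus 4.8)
The plan is to establish the formula and the left uniform continuity \emph{together}, by first running $m$ through a Cohen factorization on the $L^1(G)$-module structure: this replaces the (badly behaved) translation action of $G$ on $L^\infty(G,\mA^*)$ by an $L^1(G)$-convolution, whose norm continuity then delivers everything.

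First I would record the pointwise shape of ${}_{\psi}f$. For $\psi\in L^1(G,\mA)$, Lemma~\ref{easy2} together with the substitution $r=st$ gives the pointwise identity ${}_{\psi}f(s)=\int_G {}_{\psi(t)}\big(f(st)\big)\,dt=\int_G f^{(t,\psi(t))}(s)\,dt$ in $\mA^*$. Writing $R_t$ for the right translation $R_t h(s)=h(st)$ and $h_a\in L^\infty(G,\mA^*)$ for the pointwise map $h_a(s)={}_{a}(f(s))$, one has $f^{(t,a)}=R_t h_a$, so the candidate $g(t)(a):=\la m,f^{(t,a)}\ra$ is well defined and linear in $a$ with $\|g(t)\|\le\|m\|\,\|f\|_{L^\infty(G,\mA^*)}$. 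The asserted formula $f_m(t)(a)=\la m,f^{(t,a)}\ra$ is then precisely the claim $f_m=\phi_g$, i.e. $f_m(\psi)=\la m,{}_{\psi}f\ra=\int_G g(t)(\psi(t))\,dt$; this is exactly the assertion that $m$ may be pulled through the $t$-integral in the identity for ${}_{\psi}f$.

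The device that legitimizes both this interchange and the continuity is a factorization $m=\mathfrak{m}^{***}(m_0,F)$ with $F\in L^1(G)$ and $m_0\in L^1(G,\mA)^{**}$, which I would obtain from Cohen's factorization theorem (Theorem~\ref{Cohen}) applied to the $L^1(G)$-module $Z_s$, using that the canonical bounded approximate identity of $L^1(G)$ is a bounded approximate identity for $m$. Writing ${}_{F}H:=\mathfrak{m}^{**}(F,H)$, a direct computation gives ${}_{F}H(s)=\int_G F(u)H(su)\,du$, whence the translation identity ${}_{F}(R_t H)={}_{R_{t^{-1}}F}\,H$ and the adjoint relation $\la m,H\ra=\la m_0,{}_{F}H\ra$. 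Consequently $g(t)(a)=\la m_0,{}_{R_{t^{-1}}F}\,h_a\ra$, and since $t\mapsto R_{t^{-1}}F$ is norm continuous in $L^1(G)$, the bound $\|{}_{R_{t^{-1}}F}h_a-{}_{R_{t'^{-1}}F}h_a\|_\infty\le \|R_{t^{-1}}F-R_{t'^{-1}}F\|_{L^1(G)}\,\|f\|_{L^\infty(G,\mA^*)}\,\|a\|$ shows $t\mapsto g(t)$ is norm continuous, i.e. $g\in C(G,\mA^*)\subseteq LUC(G,\mA^*)$ by the inclusion established above. The same factorization makes $t\mapsto {}_{F}f^{(t,\psi(t))}$ norm continuous, hence Bochner integrable in $L^\infty(G,\mA^*)$; a Fubini argument then yields ${}_{F}({}_{\psi}f)=\int_G {}_{F}f^{(t,\psi(t))}\,dt$, so that $\la m,{}_{\psi}f\ra=\la m_0,{}_{F}({}_{\psi}f)\ra=\int_G\la m_0,{}_{F}f^{(t,\psi(t))}\ra\,dt=\int_G g(t)(\psi(t))\,dt$. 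This proves $f_m=\phi_g$, and with it both $f_m\in LUC(G,\mA^*)$ and the identity $f_m(t)(a)=\la m,f^{(t,a)}\ra$.

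The main obstacle is the factorization step itself. Cohen's theorem demands that the canonical bounded approximate identity $\{e_\beta\}$ of $L^1(G)$ act as a bounded approximate identity on $m$, i.e. $\mathfrak{m}^{***}(m,e_\beta)\to m$ in norm; verifying this neo-unitality for $m\in Z_s$ (so that $m$ lies in the essential submodule and $m_0$ stays in a space controlled by the pairing $\la m_0,\cdot\ra$) is the delicate point. Its necessity is clear, since the naive estimate $|g(zt)(a)-g(t)(a)|\le\|m\|\,\|R_{t^{-1}zt}h_a-h_a\|_\infty$ is useless — right translation is not norm continuous on $L^\infty$ — and it is precisely the $L^1(G)$-smoothing carried by $F$ that converts this into the uniform continuity needed. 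Once the factorization is secured, the remaining computations are the routine ones indicated above.
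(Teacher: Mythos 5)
Your reduction of everything to the factorization $m=\mathfrak{m}^{***}(m_0,F)$ is exactly where the proof breaks down, and the gap cannot be repaired from the stated hypotheses. Cohen's theorem (\cref{Cohen}) requires that the bounded approximate identity $\{e_\beta\}$ of $L^1(G)$ act as an approximate identity \emph{in norm} on the module, i.e. $\|\mathfrak{m}^{***}(m,e_\beta)-m\|\to 0$ for every $m\in Z_s$. Membership in a (pseudo-)topological center only yields weak$^*$ information: one can show $\mathfrak{m}^{***}(m,e_\beta)\to m$ weak$^*$ (via a mixed identity), but norm convergence is a genuinely stronger property, and no center-type hypothesis supplies it. A concrete obstruction: for $A=c_0$ (Arens regular, being a $C^*$-algebra) one has $Z(A^{**})=A^{**}=\ell^\infty$, yet for the BAI $e_n=(1,\dots,1,0,0,\dots)$ and $m=(1,1,1,\dots)$ one gets $\|me_n-m\|_\infty=1$ for all $n$. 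So ``lies in the center'' never by itself implies the neo-unitality that Cohen's theorem demands; you flag this as ``the delicate point'' but it is in fact the entire difficulty, and it is left unproved.

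Worse, in this specific setting the factorization step is circular. If $m=\mathfrak{m}^{***}(m_0,F)$ with $F\in L^1(G)$, then $\la m,f\ra=\la m_0,{}_Ff\ra$ for all $f\in L^\infty(G,\mA^*)$, and, as you yourself compute, ${}_Ff(s)=\int F(u)f(su)\,du$ lies in $C(G,\mA^*)$ because translation is $L^1$-norm continuous. Hence the action of $m$ on all of $L^\infty(G,\mA^*)$ is determined by the restriction of $m_0$ to $C(G,\mA^*)$, i.e. by a measure $\eta\in M_r(G,\mA^{**})$ (\cref{RRT1}), and $\la m,f\ra=\la\eta\ast F,f\ra$ with $\eta\ast F\in L^1(G,\mA^{**})$. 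In other words, the factorization already forces every element of $Z_s$ to be represented by an $L^1$ vector-valued function --- essentially the paper's main theorem --- so it cannot serve as an ingredient in the proof of this preparatory lemma. There is also a secondary foundational issue: the paper establishes the $L^1(G)$-module structure only for $Z$ (not $Z_s$) and only for abelian $G$, while \cref{imp1} concerns an arbitrary compact group. The paper's own route avoids all of this: approximate $m$ weak$^*$ by continuous compactly supported $\phi_\alpha$ (Goldstine), use the $Z_s$ hypothesis to get $f_{\phi_\alpha}\to f_m$ weakly, pass to convex combinations converging in norm by Mazur's lemma, and conclude because each $f_{\phi_\alpha}\in LUC(G,\mA^*)$ and $LUC(G,\mA^*)$ is norm closed; the evaluation formula then follows by testing against $f^{(t,a)}$.
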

\begin{proof}
Using the Goldstein's theorem and the denseness of compactly supported functions in $L^1(G,\mA)$, we choose a net $\{\phi_{\alpha}\}_{\alpha\in \wedge}$ of compactly supported functions converging to $m$ in the $weak^\ast$-topology of $L^1(G,A)^{**}$. Then, \begin{eqnarray*}
	\left<n,f_m\right>&=&\left<m\diamond n,f\right>=\left< m\Box n,f\right>=\left<m,{_n}f\right>\\&=&\lim_\alpha\left<\phi_\alpha,{_n}f\right>=\lim_\alpha\left<n,f_{\phi_\alpha}\right>
\end{eqnarray*}
for each $n\in L^1(G,\mA)^{**}$. Hence, $\{f_{\phi_\alpha}\}$ converges weakly to $f_m$. Using Mazur's lemma, we can choose a net of  suitable convex combinations of $\{\phi_\alpha\}_{\alpha\in \wedge}$ such that $f_{\phi_\alpha}$ converges to $f_m$ in norm. Thus we assume that $f_{\phi_\alpha}$ converges to $f_m$ in norm. But $f_{\phi_\alpha}\in LUC(G,\mA^*)$ by previous lemma. Thus, it follows that $f_m\in LUC(G,\mA^*)$. Further, for any $t\in G$ and $a\in \mA$
\begin{eqnarray*}
	f_m(t)(a)&=&\lim_\alpha\int L_{\phi_\alpha(s)}(f(st))(a)d\mu(s)\\&=&\lim_{\alpha}\int f(st)(\phi_\alpha(s)a)d\mu(s)\\&=&\lim_{\alpha}\int\left<f(st),\phi_\alpha(s)a\right>d\mu(s)\\&=&\lim_{\alpha}\left<\phi_\alpha,f^{(t,a)}\right>\\&=&\left<m,f^{(t,a)}\right>
\end{eqnarray*} 
\end{proof}

\begin{lemma} \label{Imp2}
	Let $\mA$ be a unital Banach algebra. If $m\in Z_s$ is such that $m(f)=0$ for all $f\in C(G,\mA^*)$ then $m(f)=0$ for all $f\in L^\infty(G,\mA^*)$. Further if $\mA^*$ has RNP then $m=0$.
\end{lemma}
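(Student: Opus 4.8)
The plan is to reduce the whole statement, via the unitality of $\mA$, to a fact about a single continuous scalar function on $G$. By \cref{imp1}, for $m\in Z_s$ and $f\in L^\infty(G,\mA^*)$ the slice $f_m$ lies in $LUC(G,\mA^*)$, hence has a continuous representative, and $f_m(t)(a)=\langle m,f^{(t,a)}\rangle$. Taking $a$ to be the identity $1$ of $\mA$ (this is the only place where unitality is used) and writing $R_tf(s)=f(st)$, one has $f^{(t,1)}=R_tf$, so the function $h(t):=f_m(t)(1)=\langle m,R_tf\rangle$ is continuous on $G$, and since $R_ef=f$ we get $h(e)=\langle m,f\rangle=m(f)$. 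Thus it suffices to prove $h\equiv 0$.

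For the smoothing step I would associate to each $F\in L^1(G)$ the averaged function $g_F(s)=\int_G F(t)f(st)\,dt$. After the substitution $u=st$ this becomes $g_F(s)=\int_G F(s^{-1}u)f(u)\,du$, and because left translation is norm continuous on $L^1(G)$ the map $s\mapsto g_F(s)$ is uniformly continuous; hence $g_F\in C(G,\mA^*)$ for \emph{every} $F\in L^1(G)$. By hypothesis $m$ annihilates $C(G,\mA^*)$, so $m(g_F)=0$. The heart of the argument is then the identity
\[
 m(g_F)=\int_G F(t)\,h(t)\,dt,\qquad F\in L^1(G).
\]
Granting it, we get $\int_G F(t)h(t)\,dt=0$ for all $F\in L^1(G)$, so $h=0$ in $L^\infty(G)$; being continuous, $h\equiv 0$, and in particular $m(f)=h(e)=0$. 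As $f$ was arbitrary, the first assertion follows.

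To establish the displayed identity I would approximate $m$ exactly as in the proof of \cref{imp1}: take a bounded net $\{\phi_\alpha\}\subseteq L^1(G,\mA)$ with $\phi_\alpha\to m$ in the weak$^\ast$ topology and, after passing to convex combinations via Mazur's lemma applied to the weakly convergent net $\{f_{\phi_\alpha}\}$, arrange \emph{simultaneously} that $\phi_\alpha\to m$ weak$^\ast$ and $f_{\phi_\alpha}\to f_m$ in the norm of $L^\infty(G,\mA^*)$. Since $\phi_\alpha\in L^1(G,\mA)$, a Fubini computation gives $\langle\phi_\alpha,g_F\rangle=\int_G F(t)\,f_{\phi_\alpha}(t)(1)\,dt$. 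On one side, $g_F\in C(G,\mA^*)\subseteq L^1(G,\mA)^*$, so weak$^\ast$ convergence yields $\langle\phi_\alpha,g_F\rangle\to m(g_F)$; on the other, the norm convergence $f_{\phi_\alpha}\to f_m$ forces the scalar slices $f_{\phi_\alpha}(\cdot)(1)\to h$ uniformly on $G$, whence $\int_G F(t)f_{\phi_\alpha}(t)(1)\,dt\to\int_G F(t)h(t)\,dt$. Comparing the two limits gives the identity. I expect this interchange of the net limit with the integral to be the main obstacle: for a net, pointwise (weak$^\ast$) convergence together with a uniform bound does \emph{not} permit passing the limit under the integral, and it is precisely the upgrade to \emph{norm} convergence of $f_{\phi_\alpha}\to f_m$ (hence uniform convergence of the slices) that rescues the step.

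Finally, for the second statement, if $\mA^*$ has RNP then by \cref{RNP1} (with $p=1$) one has the isometric identification $L^1(G,\mA)^*=L^\infty(G,\mA^*)$. The first part then shows that $m$ annihilates the entire dual space $L^1(G,\mA)^*$, and therefore $m=0$ in $L^1(G,\mA)^{**}$.
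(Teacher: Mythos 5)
Your proof is correct, and while it shares the paper's overall strategy --- reduce everything, via \cref{imp1} and unitality, to the scalar function $h(t)=f_m(t)(\mathds{1}_\mA)=\langle m,f^{(t,\mathds{1}_\mA)}\rangle$ with $h(e)=m(f)$, then kill $h$ by smoothing --- the key vanishing step is implemented genuinely differently. The paper uses a single bump: by left uniform continuity of $f_m$ it picks a small neighbourhood $V$ of $e$, sets $v=\frac{1_V}{\mu(V)}\otimes\mathds{1}_\mA$, and deduces ${_v}(f_m)=0$ from the module identity ${_v}(f_m)(\phi)=({_v}f)_m(\phi)=m({_{\phi\ast v}}f)$ together with the (merely asserted) continuity of ${_{\phi\ast v}}f$; since ${_v}(f_m)(e)(\mathds{1}_\mA)$ is exactly the average $\frac{1}{\mu(V)}\int_V h\,d\mu$, left uniform continuity yields $|m(f)|\leq\epsilon$. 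You instead test against \emph{all} $F\in L^1(G)$ and prove $m(g_F)=\int_G Fh\,d\mu$ by re-running the weak$^\ast$-net and Mazur argument from the proof of \cref{imp1}, obtaining the stronger conclusion $h\equiv 0$ (that is, $\langle m,R_tf\rangle=0$ for every $t$, not only $t=e$) with no $\epsilon$-argument. The paper's route buys brevity: no second approximation net is needed. Yours buys self-containedness: the only continuity claim you use, $g_F\in C(G,\mA^*)$, you verify directly from norm-continuity of translation on $L^1(G)$, instead of asserting continuity of ${_v}f$ and ${_{\phi\ast v}}f$. Two remarks. First, the simultaneity issue you flag (keeping $\phi_\alpha\to m$ weak$^\ast$ after passing to Mazur convex combinations) is real but harmless: take convex combinations drawn from tails of the net; these still converge weak$^\ast$ to $m$, and the paper's own proof of \cref{imp1} tacitly requires the same device. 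Second, your displayed identity actually follows in one line from the paper's formalism, with no approximation at all: by \cref{easy2} one has $g_F={_{(F\otimes\mathds{1}_\mA)}}f$, hence
$$m(g_F)=m\bigl({_{(F\otimes\mathds{1}_\mA)}}f\bigr)=f_m(F\otimes\mathds{1}_\mA)=\int_G F(t)\,h(t)\,d\mu(t),$$
the last equality because \cref{imp1} identifies the functional $f_m$ with the function $t\mapsto f_m(t)$; this would let you delete the entire net construction from your argument. Your treatment of the RNP assertion coincides with the paper's.
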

\begin{proof}
	 To see this, let $f\in L^\infty(G,\mA^*)$. Using the \cref{imp1}, for $\epsilon>0$, we choose $V\subset \{x:||f_m(x)-f_m(e)||<\epsilon\}$ such that $V$ is open and relatively compact. Consider $v=\frac{1_V}{\mu(V)}\otimes \mathds{1}_A$. One can easily see that ${_v}f\in C(G,\mA^*)$. Hence, ${_v}(f_m)(\phi)=({_v}f)_m(\phi)=m({_{\phi\ast v}}f)=0$, because ${_{\phi\ast v}}f\in C(G,\mA^*)$ for each $\phi\in L^1(G,\mA)$. Thus, ${_v}(f_m)=0$.
	\begin{eqnarray*}
		|m(f)|&=&\left|f_m(e)(\mathds{1}_\mA)-{_v}(f_m)(e)(\mathds{1}_\mA)\right|\\&=&\frac{1}{\mu(V)}\left|\int_V\left(f_m(e)(\mathds{1}_\mA)-f_m(x)(\mathds{1}_\mA)\right)dx\right|\\&\leq& \frac{1}{\mu(V)}||f_m(e)-f_m(x)||\mu(V)\\&=&\epsilon
	\end{eqnarray*}
Hence, $m(f)=0$ for all $f\in L^\infty(G,\mA^*)$, proving the first part of assertion. Now if $\mA^*$ has RNP then $L^\infty(G,\mA^*)$ is the full dual space of $L^1(G,\mA)$ and hence $m=0$.
\end{proof}

 Let $S^\infty(G,\mA^*)$ denotes the closure of the space of all $\mu$-simple functions in $L^\infty(G,\mA^*)$. Clearly $C(G,\mA^*)$ is contained in $S^\infty(G,\mA^*)$ . For each $\nu\in M_r(G,A)$ we define $\varphi_{\nu}:S(G,\mA^*)\to \mathbb C$ such that  for $\sum_{i=1}^r\chi_{E_i}\otimes a^*_i\in  S(G,\mA^*)$.  $$\left<\varphi_{\nu},\sum_{i=1}^r\chi_{E_i}\otimes a^*_i\right>=\sum_{i=1}^r\left<\nu(E_i),a^*_i\right>$$ It is an easy exercise to verify that this action is well defined(independent of representation of simple functions) and is linear. Further, it can be verified that  $||\varphi_{\nu}||=||\nu||$. Hence, $M_r(G,A)$ sits inside $S^\infty(G,\mA^*)^*$ isometrically.

\begin{lemma}
Let $\mA$ be a reflexive Banach algebra. Suppose that $m\in Z(L^1(G,\mA)^{**})$ and $\nu\in M_r(G,\mA)$ be such that there exists a sequence $\{\nu_n\}\in L^1(G,\mA)$ converging to $\nu$ in the $\sigma(M_r(G,\mA),C(G,\mA^*))$ topology. Then $m\diamond n_\nu\in L^1(G,\mA)$ for any continuous extension $n_\nu$ of $\nu$ to $L^1(G,\mA)^*$ .
\end{lemma}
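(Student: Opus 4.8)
The plan is to exhibit $m\diamond n_\nu$ as the canonical image $J(g)$ of a genuine $L^1$-function $g$, by recognizing it as the weak limit of the sequence $\psi_n:=m\diamond J(\nu_n)$ (which I will show lies in $L^1(G,\mA)$) and then invoking weak sequential completeness. Throughout I use that, since $\mA$ is reflexive, $\mA^*$ has RNP, so $L^1(G,\mA)^*=L^\infty(G,\mA^*)$, $Z(L^1(G,\mA)^{**})=Z_s$, and (by \cref{RRT1}, with $\mA^{**}=\mA$) $M_r(G,\mA)=C(G,\mA^*)^*$; here $J$ denotes the canonical embedding of $L^1(G,\mA)$ into its bidual.

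First I would record that, for every $\phi\in L^1(G,\mA)$, unwinding the second Arens product gives $m\diamond J(\phi)(f)=J(\phi)(f_m)=f_m(\phi)=m({_\phi}f)=R_\phi^{**}(m)(f)$, where $R_\phi\colon\eta\mapsto\eta\ast\phi$ is right convolution and $R_\phi^*f={_\phi}f$; thus $\psi_n=m\diamond J(\nu_n)=R_{\nu_n}^{**}(m)$. The hard part, and the technical heart of the lemma, will be to prove that $R_\phi$ is a \emph{weakly compact} operator on $L^1(G,\mA)$; granting this, Gantmacher's theorem yields $R_\phi^{**}(L^1(G,\mA)^{**})\subseteq J(L^1(G,\mA))$, so each $\psi_n$ lies in $L^1(G,\mA)$. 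To establish the weak compactness I would apply the vector-valued Dunford--Pettis criterion to $\{\eta\ast\phi:\|\eta\|_{L^1}\le 1\}$. This family is uniformly integrable because, by invariance of $\mu$, $\int_E\|\eta\ast\phi(t)\|\,dt\le\|\eta\|_{L^1}\sup_{\mu(F)\le\mu(E)}\int_F\|\phi\|\,d\mu$, which tends to $0$ as $\mu(E)\to 0$ since $\|\phi(\cdot)\|\in L^1(G)$; and for each measurable $E$ the averages $\{\int_E\eta\ast\phi\,d\mu\}$ form a bounded, hence (by reflexivity of $\mA$) relatively weakly compact, subset of $\mA$. These two facts give relative weak compactness of $\{\eta\ast\phi\}$ in $L^1(G,\mA)$, i.e. weak compactness of $R_\phi$.

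Next I would pin down the limit. Fix $f\in L^\infty(G,\mA^*)$. Since $m\in Z=Z_s$, \cref{imp1} gives $f_m\in LUC(G,\mA^*)\subseteq C(G,\mA^*)$, so $f_m$ is a legitimate test function for the $\sigma(M_r(G,\mA),C(G,\mA^*))$-topology. Because $n_\nu$ extends $\nu$ and $f_m\in C(G,\mA^*)$, I obtain $m\diamond n_\nu(f)=n_\nu(f_m)=\la\nu,f_m\ra$, whereas for each $n$ the definition unwinds to $m\diamond J(\nu_n)(f)=f_m(\nu_n)=\int_G\la f_m(t),\nu_n(t)\ra\,d\mu(t)$, which is exactly the pairing of $\nu_n\in M_r(G,\mA)$ with $f_m\in C(G,\mA^*)$. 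As $\nu_n\to\nu$ in $\sigma(M_r(G,\mA),C(G,\mA^*))$, the right-hand side converges to $\la\nu,f_m\ra$. Hence $\psi_n(f)\to m\diamond n_\nu(f)$ for every $f\in L^\infty(G,\mA^*)=L^1(G,\mA)^*$; that is, $\psi_n\to m\diamond n_\nu$ in the $w^*$-topology of $L^1(G,\mA)^{**}$.

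Finally I would close with weak sequential completeness. The previous paragraph shows $\la f,\psi_n\ra$ converges for every $f\in L^1(G,\mA)^*$, so $\{\psi_n\}$ is weakly Cauchy in $L^1(G,\mA)$. Since $\mA$ is reflexive it is WSC, and $\mu$ is finite, so $L^1(G,\mA)$ is WSC by \cite[Th. 11]{Talagrand}; therefore $\psi_n\to g$ weakly for some $g\in L^1(G,\mA)$. Comparing the two limits, $m\diamond n_\nu(f)=\lim_n\la f,\psi_n\ra=\la f,g\ra=J(g)(f)$ for all $f$, whence $m\diamond n_\nu=J(g)\in L^1(G,\mA)$, as claimed. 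The only point beyond the weak-compactness step that needs care is the inclusion $LUC(G,\mA^*)\subseteq C(G,\mA^*)$, which holds on the compact group $G$ because norm-continuity of $z\mapsto L_z f_m$ forces $f_m$ to have a continuous representative.
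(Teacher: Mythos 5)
Your argument is correct, but it reaches the crucial membership $m\diamond J(\nu_n)\in L^1(G,\mA)$ by a genuinely different route than the paper. The paper represents the restriction of $m$ to $C(G,\mA^*)$ by a measure $\eta\in M_r(G,\mA)$ via \cref{RRT1}, identifies $m\Hsquare \nu_n$ with the convolution $\eta\ast\nu_n$ on $C(G,\mA^*)$ using White's theorem on convolution of weakly compact vector measures \cite{White}, and then upgrades this equality to all of $L^\infty(G,\mA^*)$ by the uniqueness lemma \cref{Imp2}; note that this uses both that $m\Hsquare\nu_n$ lies again in the center and the unitality hypothesis built into \cref{Imp2}. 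You instead use the purely operator-theoretic identity $m\diamond J(\phi)=R_\phi^{**}(m)$ together with weak compactness of the right convolution operator $R_\phi$ and Gantmacher's theorem. This buys you something real: your step needs neither $m\in Z$, nor unitality, nor any vector-measure machinery, and it proves the stronger statement that $L^1(G,\mA)$ is a right ideal in $\left(L^1(G,\mA)^{**},\diamond\right)$ --- the vector-valued analogue of the classical fact that $L^1(G)$ is an ideal in its bidual when $G$ is compact. What the paper's route buys instead is the explicit identification $m\Hsquare u=\eta\ast u$ of the product with a concrete convolution, which feeds into the measure-theoretic bookkeeping of the main theorem of Section \ref{4}. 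Your remaining two steps (using \cref{imp1} to get $f_m\in LUC(G,\mA^*)\subseteq C(G,\mA^*)$, so that $m\diamond n_\nu(f)=\nu(f_m)=\lim_n\langle \nu_n,f_m\rangle$ independently of the chosen extension, and closing with weak sequential completeness of $L^1(G,\mA)$ via \cite{Talagrand}) coincide with the paper's proof, including your explicit verification that uniformly continuous representatives are continuous, which the paper uses tacitly. One caution on your key step: the test you call the ``vector-valued Dunford--Pettis criterion'' (uniform integrability plus relative weak compactness of the set-integrals $\{\int_E \eta\ast\phi\,d\mu\}$) is not a valid sufficiency criterion in arbitrary Banach spaces; what saves the argument is precisely reflexivity of $\mA$, for which boundedness plus uniform integrability already implies relative weak compactness in $L^1(\mu,X)$ (see \cite{Diestel}, Ch.~IV; alternatively, truncate $\phi$ as $\phi\chi_{\{\|\phi\|\le R\}}+\phi\chi_{\{\|\phi\|>R\}}$ to reduce to Diestel's theorem on sets whose values lie in a fixed weakly compact set, and conclude by Grothendieck's approximation lemma). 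Cited in that form, your proof is complete.
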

\begin{proof}
Let us first assume that $u\in L^1(G,\mA)$. Then by \cref{RRT1}, restriction of $m$ to $C(G,\mA^*)$ is given by a measure $\eta\in M_r(G,\mA)$. For $f\in C_0(G,\mA^*)$
\begin{align*}
\left<m\Hsquare u,f\right> &=\left<m,{_u}f\right>\\&=\left<\eta,{_u}f\right>
\end{align*}
Since, regular $\mA$-valued measures are weakly compact in the sense that the associated operator $C(K)\to X$ is weakly compact (see \cite[Th. 5.2]{Ryan}), and space of weakly compact $\mA$-valued measures form a Banach algebra with respect to the convolution (see \cite[Th. 3.2]{White}), we conclude that $\left<m\Hsquare u,f\right> =\left<\eta\ast u,f\right>$ for all $f\in C(G,\mA^*)$. Notice that $m\Hsquare u\in Z(L^1(G,\mA)^{**})$. Thus, by \cref{Imp2},  $m\Hsquare u=\eta\ast u\in L^1(G,\mA)$ (because $\eta\ast u$ is a regular measure of bounded variation and $\nu\ast u\ll \mu$). \\
Now if $f\in L^\infty(G,\mA^*)$, then $f_m\in LUC(G)$ by \cref{imp1} and hence
\begin{align*}
	\left<m\diamond n_\nu,f\right>&=n(f_m)\\&=\lim_n \nu_n(f_m)\\&=\lim_n m\diamond \nu_n(f)\\&=\lim_n m\Hsquare \nu_n(f)
\end{align*}
But $m\Hsquare \nu_n\in L^1(G,\mA)$  as proved above and $L^1(G,\mA)$ is WSC, hence $m\diamond n_\nu\in L^1(G,\mA)$.
\end{proof}
\begin{cor}\label{metrizable}
Let $\mA$ be a unital refelexive Banach algebra $K$ be a closed subgroup of $G$ such that $G/K$ is metrizable, then $m\diamond \mu_k\in L^1(G,\mA)$ where $\mu_k$ is the $\mA$-valued vector measure on Borel subsets of $G$ such that $\mu_k(E)=\frac{\mu(E\cap K)}{\mu(K)}\mathds{1}_\mA$ for each Borel subset $E$ of $G$.
\end{cor}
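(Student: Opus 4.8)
The plan is to reduce the entire statement to the preceding lemma: if I can produce a \emph{sequence} $\{\nu_n\}$ in $L^1(G,\mA)$ converging to $\mu_K$ in the $\sigma(M_r(G,\mA),C(G,\mA^*))$-topology, then that lemma applies verbatim with $\nu=\mu_K$ and delivers $m\diamond n_{\mu_K}\in L^1(G,\mA)$. So the whole content of the corollary is the construction of such an approximating sequence, and this is exactly where the hypothesis that $G/K$ be metrizable enters: it upgrades a mere net of shrinking neighborhoods into a countable one, so that the approximation is genuinely \emph{sequential}. This matters because the preceding lemma rests on the weak sequential completeness of $L^1(G,\mA)$ (valid since $\mA$ is reflexive and $\mu$ is finite), which is a statement about sequences, not nets.

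Concretely, I would first use that the quotient map $\pi:G\to G/K$ is a continuous open surjection and that $G/K$, being compact and metrizable, admits a countable decreasing basis $\{W_n\}$ of open neighborhoods of the identity coset $eK$ with $\bigcap_n W_n=\{eK\}$. Putting $V_n=\pi^{-1}(W_n)$ yields a decreasing sequence of open, $K$-saturated neighborhoods of $K$ in $G$ with $\bigcap_n V_n=K$, and $\mu(V_n)>0$ since each $V_n$ is open and nonempty. I would then set
$$\nu_n=\frac{1}{\mu(V_n)}\,\chi_{V_n}\otimes\mathds{1}_\mA\in L^1(G,\mA),$$
the obvious $\mA$-valued analogue of an approximate identity concentrated along $K$. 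Here $\mu_K$ is understood as $\lambda_K\otimes\mathds{1}_\mA$, where $\lambda_K$ is the normalized Haar measure of the compact group $K$ regarded as a measure on $G$ supported on $K$.

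The heart of the argument is the weak-$*$ convergence $\nu_n\to\mu_K$ tested against $C(G,\mA^*)$. For $f\in C(G,\mA^*)$ I would compute $\langle\nu_n,f\rangle=\frac{1}{\mu(V_n)}\int_{V_n} f(t)(\mathds{1}_\mA)\,d\mu(t)$ and invoke Weil's quotient integration formula to rewrite the integral over the saturated set $V_n=\pi^{-1}(W_n)$ as an iterated integral over $W_n$ and the fibre $K$, with $\mu(V_n)$ equal to the $G/K$-measure of $W_n$. Since $f$ is continuous and $K$ is compact, the fibred function $\dot x\mapsto\int_K f(xk)(\mathds{1}_\mA)\,d\lambda_K(k)$ is continuous on $G/K$, so its averages over the shrinking neighborhoods $W_n$ converge to its value at $eK$; this gives $\langle\nu_n,f\rangle\to\int_K f(k)(\mathds{1}_\mA)\,d\lambda_K(k)=\langle\mu_K,f\rangle$. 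I expect this averaging step to be the main obstacle to state cleanly — not because it is deep, but because it forces one to fix the normalization of $\mu_K$ so that it coincides with Haar measure on $K$, and it requires a careful appeal to the quotient integration formula together with the (uniform) continuity of $f$ on the compact group $G$.

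With the sequence $\{\nu_n\}$ in hand and this convergence verified, the hypotheses of the preceding lemma are met for $\nu=\mu_K\in M_r(G,\mA)$, regularity being inherited from that of Haar measure on the compact group $K$ and bounded variation from $\|\mu_K\|(G)=\|\mathds{1}_\mA\|<\infty$. Applying the lemma then gives $m\diamond n_{\mu_K}\in L^1(G,\mA)$ for any continuous extension $n_{\mu_K}$ of $\mu_K$ to $L^1(G,\mA)^*$, which is precisely the assertion. Reflexivity of $\mA$ is used through the preceding lemma and to identify $L^1(G,\mA)^*$ with $L^\infty(G,\mA^*)$, while unitality of $\mA$ enters only to make sense of $\mathds{1}_\mA$ in the definitions of $\mu_K$ and $\nu_n$.
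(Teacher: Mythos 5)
Your proposal is correct and follows essentially the same route as the paper: the paper likewise takes a decreasing sequence of neighborhoods $U_n$ of $K$ (available since $G/K$ is metrizable), forms $u_n=\frac{1}{\mu(U_n)}\chi_{U_n}\mathds{1}_\mA\in L^1(G,\mA)$, asserts $u_n\to\mu_K$ in the $\sigma(M_r(G,\mA),C(G,\mA^*))$-topology, and invokes the preceding lemma. The only difference is that you supply the details the paper labels ``clearly'' --- using saturated neighborhoods $\pi^{-1}(W_n)$ and Weil's quotient integration formula to verify the convergence, and fixing the normalization of $\mu_K$ as Haar measure on $K$ --- which is a genuine tightening of the same argument rather than a different one.
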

\begin{proof}
Since, $G/K$ is metrizable, we can choose a decreasing sequence $\{U_n\}$ of neighborhoods of $K$ such that $K=\cap_n U_n$. Let $u_n=\frac{1}{\mu(U_n)}\chi_{U_n}\mathds 1_\mA$. Clearly $u_n\in L^1(G,\mA)$ and $u_n\to \mu_K$ in the $\sigma(M_r(G,\mA),C(G,\mA^*))$ topology. By previous lemma, $m\diamond \mu_k\in L^1(G,\mA)$.
\end{proof}
For a subgroup $K$ of $G$, we say that $f\in L^\infty(G,\mA^*)$ is right {\em $K$-periodic } if $kf=f$ for all $k\in K$.
\begin{lemma}\label{periodic}
	Let $K$ be a compact subgroup of $G$ and $m\in Z$. If $f\in L^\infty(G,\mA^*)$ is $K$-periodic then $\left<m,f\right>=\left<m\diamond\mu_K,f\right>$ for all $f\in L^\infty(G,\mA^*)$.
\end{lemma}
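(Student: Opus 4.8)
The plan is to evaluate $\la m\diamond\mu_K,f\ra$ directly by unwinding the definition of the Arens $\diamond$-product and then invoking \cref{imp1}. By definition $\la m\diamond\mu_K,f\ra=\mu_K(f_m)$, so the first thing I would record is that this quantity depends only on $f_m$. Since $m\in Z\subseteq Z_s$, \cref{imp1} guarantees that $f_m\in LUC(G,\mA^*)$; in particular $f_m$ is continuous and hence lies in $S^\infty(G,\mA^*)$, on which $\mu_K$ is a genuine functional. This has two consequences: the pairing of the regular $\mA$-valued measure $\mu_K$ against $f_m$ is legitimate, and its value is independent of whatever extension of $\mu_K$ to $L^1(G,\mA)^{**}$ is used to form $m\diamond\mu_K$.

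Next I would compute the measure pairing explicitly. Under the identification of \cref{RRT1}, pairing an element $\nu\in M_r(G,\mA)$ with $g\in C(G,\mA^*)$ is given by $\nu(g)=\int_G\la d\nu(s),g(s)\ra$. Since $\mu_K(E)=\frac{\mu(E\cap K)}{\mu(K)}\mathds 1_\mA$, the integration against the continuous function $f_m$ collapses to an average over $K$:
\begin{align*}
\la m\diamond\mu_K,f\ra=\mu_K(f_m)=\frac{1}{\mu(K)}\int_K f_m(k)(\mathds 1_\mA)\,dk.
\end{align*}
I would then apply the pointwise formula of \cref{imp1}, namely $f_m(t)(a)=\la m,f^{(t,a)}\ra$ with $f^{(t,a)}(s)={_a}(f(st))$. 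Taking $a=\mathds 1_\mA$ and using that ${_{\mathds 1_\mA}}\psi=\psi$ for every $\psi\in\mA^*$, we obtain $f^{(k,\mathds 1_\mA)}(s)=f(sk)=(kf)(s)$, so that $f_m(k)(\mathds 1_\mA)=\la m,kf\ra$. The $K$-periodicity of $f$ now gives $kf=f$ for every $k\in K$, whence $\la m,kf\ra=\la m,f\ra$ is constant in $k$; since the normalized average of this constant over $K$ is again $\la m,f\ra$, substitution yields $\la m\diamond\mu_K,f\ra=\la m,f\ra$, which is the claim.

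The individual computations are routine once the right objects are in place; the one point that genuinely needs care is the first. I must ensure that $m\diamond\mu_K$ is meaningful and that the measure pairing $\mu_K(f_m)$ is unambiguous, and this is precisely where $m\in Z$ (so that $m\in Z_s$ and \cref{imp1} forces $f_m$ to be continuous) does the essential work. The reduction $f^{(k,\mathds 1_\mA)}=kf$ relies on $\mA$ being unital, which is consistent with the running hypotheses of this section, and on matching the translation convention $(tf)(x)=f(xt)$ with the definition of $f^{(t,a)}$.
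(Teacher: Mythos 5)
Your proof is correct and takes essentially the same route as the paper: both unwind $\la m\diamond\mu_K,f\ra=\mu_K(f_m)$, invoke \cref{imp1} to see that $f_m$ is (left uniformly) continuous with $f_m(k)(\mathds{1}_\mA)=\la m,kf\ra$, and use the $K$-periodicity to collapse the normalized average over $K$ to $\la m,f\ra$. The only cosmetic difference is that the paper transfers the periodicity to $f_m$ and evaluates at $e$, whereas you keep the periodicity on $f$ itself inside the integrand; your extra remarks on the well-definedness of the pairing $\mu_K(f_m)$ are a sound (and welcome) elaboration of what the paper leaves implicit.
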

\begin{proof}
	Since, $f$ is $K$-periodic, by \cref{imp1}, $f_m$ is also $K$-periodic. Hence, \begin{align*}
		\left<m,f\right>&=f_m(e)(\mathds 1_\mA)\\&=\left<\mu_k,f_m\right>\\&=\left<m\diamond \mu_K,f\right>
	\end{align*}
\end{proof}
Now, we have all the required tools to prove that for an  abelian compact group $G$ and a reflexive Banach algebra $\mA$, the generalized group algebra $L^1(G,\mA)$ is left Strongly Arens irregular.
\begin{theorem}
Let $G$ be a compact  group and $\mA$ be a unital reflexive Banach algebra. Then, $$Z(L^1(G,\mA)^{**})=L^1(G,\mA).$$
\end{theorem}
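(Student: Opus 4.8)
The inclusion $L^1(G,\mA)\subseteq Z(L^1(G,\mA)^{**})$ is automatic, since the two Arens products agree on $J(L^1(G,\mA))$ and left multiplication by a canonical image is $w^*$-continuous. For the reverse inclusion fix $m\in Z(L^1(G,\mA)^{**})$. As $\mA$ is reflexive, $\mA^*$ has RNP, so $L^1(G,\mA)^*=L^\infty(G,\mA^*)$ and (as noted after the definition of the pseudo-center) $Z(L^1(G,\mA)^{**})=Z_s(L^1(G,\mA)^{**})$; in particular $m\in Z_s$, so \cref{imp1} applies to $m$. The plan is to manufacture an explicit sequence in $L^1(G,\mA)$ converging to $m$ in the $w^*$-topology $\sigma(L^1(G,\mA)^{**},L^\infty(G,\mA^*))$ and then to land the limit back inside $L^1(G,\mA)$ via weak sequential completeness.

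First I record the two consequences of \cref{imp1} that drive the argument. For every $f\in L^\infty(G,\mA^*)$ the function $f_m$ is (left uniformly) continuous, and since $\mA$ is unital, setting $t=e$, $a=\mathds{1}_{\mA}$ in $f_m(t)(a)=\langle m,f^{(t,a)}\rangle$ together with $f^{(e,\mathds{1}_{\mA})}=f$ yields
\begin{equation}\label{eval}
\langle m,f\rangle=f_m(e)(\mathds{1}_{\mA}),\qquad f\in L^\infty(G,\mA^*).
\end{equation}
By the structure theory of compact groups choose a decreasing sequence $\{K_n\}$ of compact normal subgroups with each $G/K_n$ metrizable (a Lie quotient) and $\bigcap_n K_n=\{e\}$; here I use that $G$ admits a countable such family, reducing if necessary to the metrizable case. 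For each $n$ set $g_n:=m\diamond\mu_{K_n}$, which lies in $L^1(G,\mA)$ by \cref{metrizable}. Using the definition of $\diamond$ and the fact that $f_m\in LUC(G,\mA^*)\subseteq C(G,\mA^*)$, the pairing of $\mu_{K_n}$ against $f_m$ is computed intrinsically by the measure (recall $\mu_{K_n}$ is the normalized Haar measure of $K_n$ tensored with $\mathds{1}_{\mA}$):
\begin{equation*}
\langle g_n,f\rangle=\langle m\diamond\mu_{K_n},f\rangle=\langle\mu_{K_n},f_m\rangle=\int_{K_n}f_m(k)(\mathds{1}_{\mA})\,d\mu_{K_n}(k).
\end{equation*}
Since $\mu_{K_n}$ concentrates at $e$ and $f_m$ is continuous at $e$, the right-hand side converges to $f_m(e)(\mathds{1}_{\mA})$, which by \eqref{eval} equals $\langle m,f\rangle$. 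Thus $\langle g_n,f\rangle\to\langle m,f\rangle$ for \emph{every} $f\in L^\infty(G,\mA^*)$, i.e.\ $g_n\to m$ in the $w^*$-topology of the bidual; this is the step where the center hypothesis enters essentially and non-circularly, through the continuity of $f_m$.

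It remains to place the limit inside $L^1(G,\mA)$. The displayed convergence shows that $\{g_n\}\subseteq L^1(G,\mA)$ is bounded (by $\|m\|$) and that $\langle g_n,f\rangle$ converges for each $f$ in the dual $L^\infty(G,\mA^*)$, i.e.\ $\{g_n\}$ is weakly Cauchy. Since $\mA$ is reflexive it is weakly sequentially complete, and as $\mu$ is finite ($G$ compact) the space $L^1(G,\mA)$ is weakly sequentially complete as well. Hence $\{g_n\}$ converges weakly in $L^1(G,\mA)$ to some $g$; because weak convergence in $L^1(G,\mA)$ forces $w^*$-convergence in the bidual, uniqueness of $w^*$-limits gives $g=m$, so $m=g\in L^1(G,\mA)$, completing the proof.

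I expect the main obstacle to be twofold. The heart is isolating \eqref{eval} and the intrinsic evaluation $\langle\mu_{K_n},f_m\rangle=\int f_m(\mathds{1}_{\mA})\,d\mu_{K_n}$, so that the continuity of $f_m$ (hence the membership $m\in Z_s$) converts the concentration of $\mu_{K_n}$ at $e$ into full $w^*$-convergence against \emph{all} of $L^\infty(G,\mA^*)$; a singular $m$ such as a point mass fails precisely because its associated $f_m$ would not be continuous, which is the conceptual reason the center collapses to $L^1(G,\mA)$. The second, more technical point is the passage from the net of Lie quotients to a genuine sequence $\{K_n\}$, which is what makes weak sequential completeness (a statement about sequences) applicable and which implicitly requires reducing to a metrizable $G$.
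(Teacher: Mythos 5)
Your argument is sound for \emph{metrizable} $G$, and in that case it is essentially the paper's own machinery: indeed, once $G$ is metrizable you do not need the sequence $\{K_n\}$ at all, since \cref{metrizable} applied with $K=\{e\}$ (so that $\mu_K$ is the point mass $\delta_e\otimes\mathds{1}_\mA$) already gives $m\diamond\mu_{\{e\}}\in L^1(G,\mA)$, and the identity $\langle m\diamond\mu_{\{e\}},f\rangle=f_m(e)(\mathds{1}_\mA)=\langle m,f\rangle$ shows $m\diamond\mu_{\{e\}}=m$. The genuine gap is the non-metrizable case, which you wave at with ``reducing if necessary to the metrizable case.'' No such reduction is given, and the object your proof needs does not exist there: a decreasing sequence $\{K_n\}$ of closed (normal) subgroups with each $G/K_n$ metrizable and $\bigcap_n K_n=\{e\}$ exists \emph{if and only if} $G$ is metrizable, because the diagonal map embeds the compact group $G$ homeomorphically into the countable product $\prod_n G/K_n$, which is metrizable. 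The structure theory of compact groups only provides a directed \emph{net} of such subgroups, and your final step is precisely the one that cannot tolerate nets: weak sequential completeness of $L^1(G,\mA)$ says nothing about a bounded net that converges $w^*$ in the bidual (that would require relative weak compactness of the net, which is exactly what is unavailable). So the ``second, more technical point'' you mention at the end is not technical; it is the entire difficulty of the theorem.

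The paper closes this gap by localizing the problem at null sets rather than globalizing at the identity. It takes the restriction $\nu_m\in M_r(G,\mA)$ of $m$ to $C(G,\mA^*)$, fixes an arbitrary compact set $B$ with $\mu(B)=0$, and builds continuous functions $\phi_n$ supported in shrinking neighborhoods $U_n$ of $B$; the pseudo-metrics $d_n(x,y)=\|x\phi_n-y\phi_n\|_\infty$ determine a closed subgroup $K$ \emph{depending on $B$} (not normal in general) for which $G/K$ is metrizable, so that \cref{metrizable} yields $m\diamond\mu_K\in L^1(G,\mA)$. The sets $V_n=\{\phi_n\neq 0\}$ are then right $K$-invariant, so the functions $\chi_{V_n}\otimes a^*$ are right $K$-periodic, and \cref{periodic} transfers evaluation: $\langle\nu_m,\chi_{V_n}\otimes a^*\rangle=\langle m\diamond\mu_K,\chi_{V_n}\otimes a^*\rangle\to 0$ since $\mu(V_n)\to\mu(B)=0$ and $m\diamond\mu_K$ is an $L^1$ function. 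Hence $\nu_m(B)=0$ for every compact null set $B$, regularity gives $\nu_m\ll\mu$, and the RNP of the reflexive algebra $\mA$ converts $\nu_m$ into an element of $L^1(G,\mA)$, which \cref{Imp2} identifies with $m$. Your concentration-at-$e$ idea is the right intuition, but it must be implemented one null set at a time through a subgroup tailored to that set, rather than through a global sequence of subgroups shrinking to $\{e\}$.
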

\begin{proof}
	The inclusion $L^1(G,\mA)\subset Z(L^1(G,\mA)^{**})$ holds trivially true. To prove the reverse inclusion, let $m\in Z(L^1(G,\mA)^{**})$ and $\nu_m\in M_r(G,\mA)$ denote its restriction to $C(G,\mA^*)$. Due to \cref{imp1}, it will be sufficient to show that $\nu_m\in L^1(G,\mA)$. Let $B$ be a compact subset of $G$ such that $\mu(B)=0$. Then we choose a decreasing sequence of open sets $U_n\supset B$ such that $(\mu+|\nu_m|)(U_n\setminus B)\to 0$. By induction, we construct a sequence $\{\phi_n\}$ in $C(G)$ such that $0\leq \phi_n\leq 1$, $\phi_n(x)=1$ for $x\in B$ and $\phi_n(x)=0$ for $x\notin U_n\cap V_{n-1}$ (where $V_0=G$,$V_n=\{y:\phi_n(y)\neq 0\}$, $n=1,2,...$). For each $n$, $$d_n(x,y)=||x\phi_n-y\phi_n||_\infty$$ defines a continuous pseudo metric on $G$, and $K=\cap_{n=1}^\infty K_n$, then $G/K$ is metrizable and hence $m\diamond \mu_K\in L^1(G,\mA)$ by \cref{metrizable}. Consequently, by \cref{periodic} $\left<\nu_m,f\right>=\left<m\diamond \mu_K,f\right>$ for each right $K$-periodic function function $f\in L^\infty(G,\mA^*)$. Since, $\{V_n\}$ is decreasing, $\mu(V_n)\to \mu(B)=0$. Further, for each $a^*\in \mA^*$, the function $\chi_{V_N}\otimes a^*$ is right $K$-periodic. Thus,
	$$\nu_{m,a^*}(V)=\left<\nu,\chi_{V_n}\otimes a^*\right>=\left<m\diamond\mu_K,\chi_{V_n}\otimes a^*\right>\to 0\hspace{0.3in}(\because m\diamond\mu_K\in L^1(G,\mA))$$
	Since, $B\subset V_n\subset U_n$, and we have $\nu_{m,a^*}(V_n)\to \nu_{m,a^*}(B)$. Thus, $\nu_{m,a^*}(B)=0$ for each $a^*\in \mA^*$. Hence, $\nu_m(B)=0$ and due to regularity of $\nu_m$, we conclude that $\nu_m\ll \mu$, i.e $\nu_m\in L^1(G,\mA)$ and required.
\end{proof}
\begin{cor}
	Let $G$ be a compact abelian group and $\mA$ be any reflexive Banach algebra(not necessarily unital), then $L^1(G,\mA)$ is SAI.
\end{cor}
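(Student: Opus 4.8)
The plan is to reduce the general (possibly non-unital) reflexive case to the unital case already settled in the theorem immediately preceding this corollary, using the unitization permanence property recorded in \cref{unitization1}. The pivotal observation is that the minimal unitization $\tilde{\mA}=\mA\oplus\mathbb{C}$ of a reflexive Banach algebra $\mA$ is again a \emph{reflexive} Banach algebra which is, by construction, \emph{unital}. Indeed, $\tilde{\mA}$ is the direct sum of the reflexive space $\mA$ with the one-dimensional (hence reflexive) space $\mathbb{C}$, and a finite direct sum of reflexive Banach spaces is reflexive; the equivalent norm $\|(a,\alpha)\|=\|a\|+|\alpha|$ is irrelevant to this, since reflexivity is a topological invariant of the norm topology.

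First I would record that $\tilde{\mA}$ is a unital reflexive Banach algebra. Applying the previous theorem to the compact group $G$ and the algebra $\tilde{\mA}$ then yields $Z(L^1(G,\tilde{\mA})^{**})=L^1(G,\tilde{\mA})$, that is, $L^1(G,\tilde{\mA})$ is SAI. Note here that the preceding theorem is stated for an arbitrary compact group, so its hypotheses are met by any compact abelian $G$ without extra assumptions.

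Finally, I would invoke \cref{unitization1}: since $G$ is a compact abelian group, $L^1(G,\mA)$ is SAI if and only if $L^1(G,\tilde{\mA})$ is SAI. Combining this equivalence with the previous step gives that $L^1(G,\mA)$ is SAI, which is exactly the assertion. This argument is essentially bookkeeping that glues together the unital theorem and the unitization permanence property, so I do not anticipate any substantial obstacle; the only point needing a (routine) check is the reflexivity of $\tilde{\mA}$, which follows from the stability of reflexivity under finite direct sums, and the fact that abelianness of $G$ is precisely the hypothesis that \cref{unitization1} already requires.
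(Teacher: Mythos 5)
Your proposal is correct and follows exactly the paper's own route: the paper's one-line proof also combines the preceding theorem (applied to the unital reflexive algebra $\tilde{\mA}$) with the unitization permanence property \cref{unitization1}. Your explicit check that $\tilde{\mA}$ is reflexive (as a finite direct sum of reflexive spaces) is a detail the paper leaves implicit, but it is the same argument.
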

\begin{proof}
	The preceeding theorem combined with the \cref{unitization1} proves the assertion.
\end{proof}
\section{Bidual of $L^1(G,\mA)$ for non-reflexive Banach algebra $\mA$.}
As we have noticed in the previous sections that when $\mA^*$ does not have RNP, then the dual $L^1(G,\mA)^*$ strictly contains a copy of $L^\infty(G,\mA^*)$. This makes it difficult to access the topological center. It would be too demanding to expect $L^1(G,\mA)$ to be SAI in such cases, even when $\mA$ itself is SAI. But pseudo-center seems to be accessible in certain cenarios. We shall see that in certain situations, the elements of pseudo center can be identified with $Z(\mA^{**})$-valued measures.\\

	Let $\mA$ be a Banach algebra and $a^*\in \mA^*$ be a fixed element. For each $m\in Z_s(L^1(G,\mA)^{**})$, there is an associated map $\Delta_{m,a^*}: C(G)\to \mA^*$ defined as $\Delta_{m,a^*}(f)(a)=m(f\otimes {_a}a^*)$.

\begin{lemma} \label{awesome}
	Let $G$ be compact Hausdorff group, $\mA$ be a Banach algebra,and $m\in Z_s(L^1(G,\mA)^{**})$be such that $\Delta_{m,a^*}$ is compact for every $a^*\in \mA^*$, then the restriction of $m$ to $C(G,\mA^*)$ is a $Z(\mA^{**})$-valued measure.\end{lemma}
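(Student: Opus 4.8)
The plan is to realise the restriction of $m$ as a regular vector measure and then reduce the membership $\nu_m(E)\in Z(\mA^{**})$ to an identity on continuous test functions, where the pseudo-center hypothesis $m\in Z_s$ can be applied directly. First I would invoke Theorem~\ref{RRT1} to identify $\nu_m:=m|_{C(G,\mA^*)}$ with a regular, bounded-variation measure $\nu_m\in M_r(G,\mA^{**})$. For each $\xi\in\mA^*$ write $(\nu_m)_\xi$ for the scalar measure $E\mapsto\langle\nu_m(E),\xi\rangle$; then $\int_G f\,d(\nu_m)_\xi=\langle m,f\otimes\xi\rangle=:m_f(\xi)$, so that $\int_G f\,d\nu_m=m_f\in\mA^{**}$ for every $f\in C(G)$. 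Because the Arens module maps $u\mapsto{}_ua^*$ and $u\mapsto a^*_u$ are bounded from $\mA^{**}$ into $\mA^*$ and $\nu_m$ is norm countably additive of bounded variation, for fixed $v\in\mA^{**}$ and $a^*\in\mA^*$ the two set functions
\[
\lambda^{\Box}(E)=\langle\nu_m(E)\Box v,a^*\rangle=(\nu_m)_{{}_va^*}(E),\qquad \lambda^{\diamond}(E)=\langle\nu_m(E)\diamond v,a^*\rangle=v\bigl(a^*_{\nu_m(E)}\bigr)
\]
are regular Borel measures on $G$; the second because $E\mapsto a^*_{\nu_m(E)}$ is a norm countably additive $\mA^*$-valued measure composed with $v$. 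The desired conclusion is exactly $\lambda^{\Box}=\lambda^{\diamond}$ for all $v,a^*$, and since both are regular it suffices to test them against $C(G)$.

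Next I would compute the two integrals. Using $\int_G f\,d\nu_m=m_f$ gives $\int_G f\,d\lambda^{\Box}=m_f({}_va^*)=\langle m_f\Box v,a^*\rangle$, and since $E\mapsto a^*_{\nu_m(E)}$ is Bochner integrable (so $v$ passes through the integral) one gets $\int_G f\,d\lambda^{\diamond}=v(a^*_{m_f})=\langle m_f\diamond v,a^*\rangle$. Thus the entire statement collapses to the single claim that $m_f\in Z(\mA^{**})$ for every $f\in C(G)$, i.e.\ $\langle m_f\Box v,a^*\rangle=\langle m_f\diamond v,a^*\rangle$ for all $v\in\mA^{**}$, $a^*\in\mA^*$.

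To prove $m_f\in Z(\mA^{**})$ I would transport the problem up to $L^1(G,\mA)^{**}$ and use $m\in Z_s$. Fixing a bounded approximate identity $\{u_\gamma\}$ of $L^1(G)$ (available since $G$ is compact), let $\tilde v$ be a weak$^*$ cluster point of $\{u_\gamma\otimes v\}$ in $L^1(G,\mA)^{**}$, a lift that behaves like $\delta_e\otimes v$. A direct computation with $(f\otimes a^*)_\phi$, whose values are norm-continuous in the group variable, shows ${}_{\tilde v}(f\otimes a^*)=f\otimes{}_va^*$, so that $\langle m\Box\tilde v,\,f\otimes a^*\rangle=\langle m,f\otimes{}_va^*\rangle=\langle m_f\Box v,a^*\rangle$. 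On the other side, Lemma~\ref{imp1} (which uses $m\in Z_s$) gives $(f\otimes a^*)_m\in LUC(G,\mA^*)$ with value $a^*_{m_f}$ at the identity, whence $\langle m\diamond\tilde v,\,f\otimes a^*\rangle=v(a^*_{m_f})=\langle m_f\diamond v,a^*\rangle$. Since $f\otimes a^*\in C(G,\mA^*)\subseteq L^\infty(G,\mA^*)$ and $m\in Z_s$, the two left-hand sides agree, which yields $m_f\in Z(\mA^{**})$ and closes the argument.

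The hard part, and the place where I expect the compactness of $\Delta_{m,a^*}$ to be indispensable, is the measure-theoretic step reconciling the two Arens products: the first product $\Box$ is not weak$^*$–weak$^*$ continuous in its second variable, so $\lambda^{\Box}$ cannot be handled by naive weak$^*$ approximation. Here the compactness of $\Delta_{m,a^*}$, through Gantmacher's theorem, forces its representing measure to take values in $\mA^*$ rather than merely in $\mA^{***}$, which is precisely what legitimises identifying $\lambda^{\diamond}$ with the genuine regular measure $\Delta_{m,a^*}^{*}(v)\in M(G)$ and what rescues the reduction to $C(G)$ when $\mA^*$ lacks the Radon–Nikodym property and $L^1(G,\mA)^*$ strictly contains $L^\infty(G,\mA^*)$. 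In short, compactness secures the regular-measure identifications, after which the pseudo-center hypothesis $m\in Z_s$ does the final reconciliation on continuous functions; getting these two mechanisms to meet is the main obstacle.
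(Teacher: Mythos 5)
Your proof is correct in its essentials but takes a genuinely different route from the paper's, and the comparison is instructive. The paper fixes an open set $E$, approximates $\chi_E$ by a continuous $f$, realizes $f\otimes{_{a^{**}}}a^*$ as ${_n}(h\otimes a^*)$ through Cohen's factorization (\cref{Cohen}) applied to $g\ast\check h=\check f$ together with a Hahn--Banach lift $n$ of $g\otimes a^{**}$, and then must evaluate a limit of scalar measures at the discontinuous function $\chi_E$; that last step is exactly where the compactness of $\Delta_{m,a^*}$ (equivalently, weak$^*$-to-norm continuity of $\Delta_{m,a^*}^*$ on bounded sets) is consumed. You instead observe that $E\mapsto\langle\nu_m(E)\Hsquare v,a^*\rangle$ and $E\mapsto\langle\nu_m(E)\diamond v,a^*\rangle$ are both compositions of the norm countably additive, regular measure $\nu_m\in M_r(G,\mA^{**})$ furnished by \cref{RRT1} with fixed elements of $\mA^{***}$ --- for the second one, norm continuity of $u\mapsto v(a^*_u)$ suffices, and the failure of weak$^*$ continuity of $\Hsquare$ in its second variable is simply irrelevant --- so both are regular Borel measures, and Riesz uniqueness reduces the whole lemma to the single identity $m_f\in Z(\mA^{**})$ for $f\in C(G)$, which your lift $\tilde v$ together with \cref{imp1} delivers using only $m\in Z_s$. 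Two remarks. First, a small repair is needed: $u_\gamma\otimes v$ does not lie in $L^1(G,\mA)^{**}$ when $v\notin\mA$ (it is a priori only a functional on $L^\infty(G,\mA^*)$), so $\tilde v$ must be taken as a weak$^*$ cluster point of Hahn--Banach extensions of these functionals --- the same device the paper uses for $g\otimes a^{**}$; with this fix your two limit computations do go through, since $(f\otimes a^*)_\phi\in C(G,\mA^*)$, $(f\otimes a^*)_m\in LUC(G,\mA^*)$ by \cref{imp1}, and $v$ passes through Bochner integrals in $\mA^*$. Second, your closing paragraph misdescribes your own argument: Gantmacher's theorem concerns weakly compact operators, the representing measure of $\Delta_{m,a^*}$ never actually appears in your proof, and in fact no step of your argument uses the compactness hypothesis at all. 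Once the lift is repaired, you have therefore proved the stronger statement that the conclusion holds for every $m\in Z_s(L^1(G,\mA)^{**})$; whether one views this as a bonus (the hypothesis is redundant) or as a warning flag, it should be stated explicitly rather than obscured by a spurious appeal to compactness, which only the paper's $\chi_E$-evaluation strategy genuinely requires.
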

\begin{proof}
	Let $m\in Z_s(L^1(G,\mA)^{**})$ be any any arbitrary element. We denote the restriction of $m$ to $C(G,\mA^*)$ by the measure $\mu_m\in M_r(G,\mA^{**})$. Due to regularity of $\mu_m$, it would be sufficient to show that $\mu_m(E)\in Z(\mA^{**})$ for any open subset $E$ of $G$. Let $E$ by an open susbset of $G$.  For any $a^*\in\mA^*$ and $a^{**}\in \mA^{**}$, we have
	\begin{align*}
		\mu_m(E)\Hsquare a^{**}(a^*)&=\mu_m(E)({_{a^{**}}}a^*)
	\end{align*}
	Let $b^*={_{a^{**}}}a^*$. Invoking, the Riesz Representation theorem \cref{RRT1}, we have 
	\begin{align*}
		\mu_m(E)\Hsquare a^{**}(a^*)=\mu_{m,b^*}\left(E\right)
	\end{align*}
	where $\mu_{m,b^*}$ is the regular borel measure corresponding to the linear functional $L_{m,b^*}:C(G)\to \mathbb C$ defined as $L_{m,b^*}(f)=m(f\otimes b^*)$. Then there exists a compactly supported continuous function $f$ such that $0\leq f\preceq \chi_E$ and  $\left|\mu_{m,b^*}(E)-\int fd\mu_{m,b^*}\right|<\epsilon$. But $\int fd\mu_{m,b^*}=m(f\otimes b^*)=m(f\otimes {_{a^{**}}}a^*)$. Now notice that for any $\phi\in L^1(G,\mA)$, we have  \begin{align*}
		(f\otimes {_{a^{**}}}a^*)(\phi)&=\int\left<f(s){_{a^{**}}}a^*,\phi(s)\right>dt\\&=\left<{_{a^{**}}}a^*,\int f(s)\phi(s)dt\right>
	\end{align*}
	Using the module version of Cohen's factorization theorem \cref{Cohen}, we know that $L^1(G)\ast L^\infty(G)=C_{lu}(G)$ and since compactly supported continuous functions are left/right uniformly continuous, we can choose $g\in L^1(G)$ and $h\in L^\infty(G)$ such that $g\ast \check{h}= \check{f}$. Let $n$ denotes any Hahn-Banach extension of $g\otimes a^{**}$ to $L^1(G,\mA)^{**}$. Now for any any net $\{a_\gamma\}_{\gamma\in \wedge}$ in $\mA$ converging to $a^{**}$ in the $w^\ast$ topology of $\mA^{**}$, we see that $g\otimes a_\gamma$ converges to $g\otimes a^{**}$ in the $\sigma(L^1(G,\mA)^{**},L^\infty(G,\mA^*))$ topology and hence, \begin{align*}
		{_n}(h\otimes a^*)(\phi)&=n((h\otimes a^*)_\phi)\\&=\left<g\otimes a^{**},(h\otimes a^{*})_\phi\right>\\&=\lim_\gamma \left<g\otimes a_\gamma,(h\otimes a^*)_\phi\right>\\&=\lim_\gamma\int g(t)(h\otimes a^*)_\phi(t)(a_\gamma)dt\\&=\lim_\gamma\int g(t)\int\left<h(st)a^*,\phi(s)a_\gamma\right>dsdt\\&=\lim_\gamma\left<a^*,\left(\int g\ast \check{h}(s^{-1})\phi(s)ds\right) a_\gamma\right>\\&=\lim_\gamma\left<a^*,\left(\int f(s)\phi(s)ds\right) a_\gamma\right>\\&=\left<{_{a^{**}}}a^*,\int f(s)\phi(s)ds\right>
	\end{align*}
	Thus, we see that ${_n}(h\otimes a^*)=f\otimes {_{a^{**}}}a^*$. And hence \begin{align*}
		\left|\mu_m(E)\Hsquare a^{**}(a^*)-m\Hsquare n(h\otimes a^*)\right|<\epsilon
	\end{align*}
	Now we turn to the second Arens product and show that it is also in the arbitrarily small neighbourhood of $m\Hsquare n(h\otimes a^*)$. Notice that \begin{align*}\mu_m(E)\diamond a^{**}(a^*)&=\lim_\gamma \mu_m(E)\Hsquare a_\gamma(a^*)\\&=\lim_\gamma \mu_{m,b_\gamma^*}(E)\\&=\lim_\gamma \int \chi_Ed\mu_{m,b_\gamma^*}
	\end{align*}
	where $b_\gamma^*={_{a_\gamma}}a^*$. Since, $\Delta_{m,a^*}$ is compact, its adjoint $\Delta_{m,a^*}^{*}:A^{**}\to M_r(G)$ given by $\Delta_{m,a^{*}}^*(a^{**})=\mu_{m,{_{a^{**}}a^*}}$ satisfies the property that for any bounded net $\{a_\lambda^{**}\}$ in $\mA^{**}$, the net $\{\Delta_{m,a^*}^*(a_\lambda^{**})\}$ converges in norm. Hence, $\lim_{\gamma}\int\chi_Ed\mu_{m,b_\lambda^*}=\mu_{m,b_\gamma^*}(E)$
	\begin{align*}
		\left|\mu_m(E)\diamond a^{**}(a^*)-\int fd\mu_{m,b^*}\right|\leq |\mu_{m,b^*}|(U\setminus E)<\epsilon\\&
	\end{align*}
	Thus, we see that $|\mu_m(E)\diamond a^{**}(a^*)-\mu_m(E)\Hsquare a^{**}(a^*)|<\epsilon$. Since $\epsilon$ was arbitrarily chosen, we conclude that $\mu_m(E)\diamond a^{**}(a^*)=\mu_m(E)\Hsquare a^{**}(a^*)$ for all $a^{**}\in \mA^{**}$ and all $a^*\in \mA^*$. Thus, $\mu_m(E)\in Z(\mA^{**})$. 
\end{proof}
\begin{rem}
	It is expected that if $\mA$ is SAI and holds RNP and WSC, then for any compact group $G$ and $m\in Z_s(L^1(G,\mA)^{**})$, the map $\Delta_{m,a^{*}}$ is compact for each $a^*\in \mA^*$. And it is possible to do computations similar to those in \cref{4} to conclude that $L^1(G,\mA)$ is pseudo-strongly irregular. For doing this, one might need  something  stronger than WSC of $L^1(G,\mA)$.\end{rem}
Although it is known that $L^1(S,X)$ is WSC for any WSC Banach space $X$ and finite measure space $S$, the addition constraint of RNP on $X$ results into something remarkably stronger. 
\begin{lemma}\label{WSC}
	If $X$ is WSC Banach space such that $X$ has RNP w.r.t a finite measure space $(S,\mathscr{A},\nu)$, then $L^1(S,X)$ is $\sigma(L^1(S,X),S^\infty(S,X^*))$-sequentially complete. In other words, if $\{\phi_n\}\}_{n\in \mathbb N}$ is sequence in $L^1(S,X)$ such that for each $f\in S^\infty(S,X^*)$ the sequence $\{\phi_n(f)\}_{n=1}^\infty$ is Cauchy, then there exists $\phi\in L^1(S,X)$ such that $\phi_n\to\phi$ in the $\sigma(L^1(S,X),S^\infty(S,X^*))$-topology.
\end{lemma}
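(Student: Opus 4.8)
The plan is to prove the statement by reducing the vector-valued $\sigma(L^1(S,X),S^\infty(S,X^*))$-completeness to the scalar weak sequential completeness of $L^1(S)$, using the RNP hypothesis to pass from a $\sigma(L^1(S,X),S^\infty(S,X^*))$-Cauchy sequence to an honest $L^1$-limit. Let $\{\phi_n\}$ be a sequence in $L^1(S,X)$ that is $\sigma(L^1(S,X),S^\infty(S,X^*))$-Cauchy, i.e. $\{\langle \phi_n,f\rangle\}_n$ is Cauchy for every $f\in S^\infty(S,X^*)$. The first step is to produce a candidate limit vector measure. For each measurable $E\subseteq S$ and each $x^*\in X^*$, the function $\chi_E\otimes x^*$ lies in $S^\infty(S,X^*)$, so the scalars $\langle \phi_n, \chi_E\otimes x^*\rangle = \int_E \langle \phi_n(s),x^*\rangle\, d\nu(s)$ converge; this lets me define a finitely additive set function $F(E)\in X^{**}$ (or show it lands in $X$) by $\langle F(E),x^*\rangle=\lim_n\int_E\langle\phi_n(s),x^*\rangle d\nu$.

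The second step is to establish that $F$ is a genuine countably additive $X$-valued measure of bounded variation, absolutely continuous with respect to $\nu$. Countable additivity and the $\nu$-absolute continuity of each scalar measure $E\mapsto \langle F(E),x^*\rangle$ follow from the Cauchy convergence together with a uniform absolute continuity / uniform countable additivity argument — this is where I expect to invoke a Vitali--Hahn--Saks type equicontinuity, deduced from the fact that the scalar sequence converges on every set $E$. The key point for landing in $X$ rather than merely $X^{**}$ is the weak sequential completeness of $X$: for fixed $E$, the sequence $\phi_n(\,\cdot\,)$ integrated over $E$ gives a weakly Cauchy sequence $\int_E\phi_n\,d\nu$ in $X$ (weakly Cauchy precisely because $\langle\int_E\phi_n,x^*\rangle$ converges for all $x^*$), so by WSC it converges weakly to some element of $X$, which I take to be $F(E)$. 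Bounded variation should come from a uniform boundedness argument applied to the simple-function duality, using that $M_r(S,X^*)^*$ or the isometric embedding of $M$-type spaces controls the variation norm.

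The third step is to apply the Radon--Nikodym property. Since $F:\mathscr{A}\to X$ is an $X$-valued measure of bounded variation, $\nu$-absolutely continuous, and $X$ has RNP with respect to $(S,\mathscr{A},\nu)$, there exists $\phi\in L^1(S,X)$ with $F(E)=\int_E\phi\, d\nu$ for all $E\in\mathscr{A}$. It then remains to verify that $\phi_n\to\phi$ in the $\sigma(L^1(S,X),S^\infty(S,X^*))$-topology. By construction this holds against every $\chi_E\otimes x^*$, hence against every $\mu$-simple function in $S(S,X^*)$; since $S^\infty(S,X^*)$ is by definition the closed linear span of such simple functions and the $\phi_n$ together with $\phi$ are uniformly bounded in $L^1(S,X)$ (again via uniform boundedness, using that pointwise-Cauchy functionals on a Banach space are uniformly bounded), a routine $3\epsilon$ density argument extends the convergence from simple functions to all of $S^\infty(S,X^*)$.

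The main obstacle will be the second step: upgrading pointwise (in $x^*$) convergence of the scalar measures into a single countably additive $X$-valued measure of bounded variation that is $\nu$-absolutely continuous. The delicate ingredients are (i) the uniform countable additivity / uniform absolute continuity needed to guarantee countable additivity of $F$ — this is exactly the place where a Vitali--Hahn--Saks or Nikod\'ym boundedness argument is required rather than a direct estimate — and (ii) ensuring $F(E)\in X$, which is precisely what WSC buys us, so the hypotheses WSC and RNP play genuinely distinct roles (WSC lands the set-values in $X$, RNP produces the density $\phi$). Once $F$ is known to be a legitimate element of $B(S,X)$, the invocation of RNP and the final density argument are routine.
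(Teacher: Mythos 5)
Your proposal is correct and follows essentially the same route as the paper's proof: weak sequential completeness yields the setwise limits $F(E)=\text{w-}\lim_n\int_E\phi_n\,d\nu$ in $X$, Vitali--Hahn--Saks gives countable additivity of the scalar measures $x^*F$ (the paper then upgrades to norm countable additivity via Orlicz--Pettis, which your bounded-variation observation also handles), RNP produces the density $\phi$, and a norm-boundedness-plus-density argument extends convergence from the functions $\chi_E\otimes x^*$ to all of $S^\infty(S,X^*)$. The roles you assign to WSC and RNP match the paper exactly.
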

\begin{proof}
	Suppose $\{\phi_n\}_{n\in \mathbb N}$ is a sequence in $L^1(S,X)$ such that for each $f\in S^\infty(S,X^*)$ the sequence $\{\phi_n(f)\}_{n=1}^\infty$ is Cauchy. Using the uniform boundedness principle and the fact that $S^\infty(S,X^*)$ is norming for $L^1(S,X)$, we can conclude that the sequence $\{\phi_n\}_{n=1}^\infty$ is norm bounded. For any subset $E$ of $S$ with $\nu(E)\neq 0$ and any $x^*\in X^*$, consider the function $\chi_E\otimes x^*\in S^\infty(S,X^*)$. Then $\{\left<\phi_n,\chi_E\otimes x^*\right>\}$ is a cauchy sequence and \begin{eqnarray*}
		\left<\phi_n,\chi_E\otimes x^*\right>&=&\int_E\left<\phi_n(t),x^*\right>dt\\
		&=&x^*\left(\int_E\phi_n(t)dt\right)
	\end{eqnarray*}
	Since, above equality holds for each $x^*\in X^*$, we conclude that $\{\int_E\phi_n(t)dt\}$ is a weakly cauchy sequence in $X$. Since $X$ is WSC, there must exists $a_E\in X$ such that $\int_E\phi_n(t)dt\to a_E$ weakly. Now we define a $X$ valued measure $F:\mathscr A\to X$ as $F(E)=a_E$. Clearly $F$ is a measure of bounded variation. Further, for each $x^*\in X^*$ we see that $x^*F$ is a finite measure wich is a limit of a sequence of measures absolutely continuous w.r.t to $\mu$ and hence, $x^*F$ is countably additive due to Vitali-Hahn-Saks theorem for each $x^*\in X^*$. Thus, $F$ is weakly countably additive and hence countably additive due to Orlicz-Pettis theorem(\cite[Sec. I.4,Corr.4]{Diestel}). Further, $F\ll\mu$ can be seen from the fact that $F$ vanishes on $\mu$-null sets. Thus, by RNP there exists $\phi\in L^1(S,X)$ such that $F(E)=\int_E \phi(t)d\mu(t)$. Thus, we have that $$\lim_n\left<\phi_n,\chi_E\otimes x^*\right>=\left<\phi,\chi_E\otimes x^*\right>$$ for each borel subset $E$ of $G$ and each $x^*\in X^*$. Since, functions of the type $\chi_E\otimes x^*$ spans $S^\infty(S,X^*)$, we conclude that $\phi_n\to \phi$ in $\sigma(L^1(S,X),S^\infty(S,X^*))$-topology.
\end{proof}

\end{document}